\documentclass[11pt,twoside,reqno,centertags,english]{amsart}
\usepackage{lettrine}

\usepackage[utf8]{inputenc}
\usepackage[english]{babel}
\usepackage{csquotes}
\usepackage{url}

%\addbibresource{nash.bib}
%\usepackage{fancyhdr}
%\usepackage[yyyymmdd, hhmmss]{datetime}
%\rhead{\today\ \currenttime}
%\pagestyle{fancy}

%\usepackage[maxbibnames=4,sorting=nyt,style=ieee,isbn=false,doi=false,url=false]{biblatex}
%\addbibresource{nash.bib}
\usepackage{bbding}
\usepackage{hyperref}
\usepackage{geometry}
\hypersetup{
colorlinks,
    citecolor=blue,
    filecolor=blue,
    linkcolor=blue,
    urlcolor=blue
    }
\def\be{\begin{equation}}
\def\ee{\end{equation}}
\newcommand {\da} {d \mathcal{H}^{m-1}}
\newcommand {\p} {\partial}
\usepackage{amsmath, amsfonts, amssymb}
\usepackage{amsthm}
\newtheorem{theorem}{Theorem}[section]
\newtheorem{lemma}[theorem]{Lemma}

\newtheorem{corollary}[theorem]{Corollary}

\theoremstyle{remark}
\newtheorem{remark}[theorem]{Remark}

\theoremstyle{definition}
\newtheorem{definition}[theorem]{Definition}

\usepackage{mathtools}

\newcommand{\beq}{\begin{equation}}
	\newcommand{\eeq}{\end{equation}}
\newcommand{\beqs}{\begin{eqnarray}}
	\newcommand{\eeqs}{\end{eqnarray}}
\newcommand{\beql}{\begin{equation} \label}
	\newcommand{\half}{\frac{1}{2}}

	\newcommand{\calC}{{\mathcal{C}}}

	\newcommand{\calF}{{\mathcal{F}}}

	 % slashed integral, math
	 % slashed integral, text
	%%\newtheorem{theorem}{Theorem}[section]
	%%\newtheorem{lemma}{Lemma}[section]
	%%\newtheorem{proposition}{Proposition}[section]
	%%\newtheorem{remark}{Remark}[section]
	%%\newtheorem{corollary}{Corollary}[section]

	%%\newcommand{\grad}{\mathop{\rm grad}\nolimits}
	
	\newcommand{\curl}{\mathop{\rm curl}\nolimits}

	\newcommand{\veps}{\varepsilon}

	\newcommand{\dee}{\mathcal{D}}
	\newcommand{\scl}{\mathcal{L}}
	
	\newcommand{\sco}{\mathcal{O}}
	\newcommand{\s}{\mathsf{s}}

\makeatletter
\DeclareRobustCommand\widecheck[1]{{\mathpalette\@widecheck{#1}}}
\def\@widecheck#1#2{%
    \setbox\z@\hbox{\m@th$#1#2$}%
    \setbox\tw@\hbox{\m@th$#1%
       \widehat{%
          \vrule\@width\z@\@height\ht\z@
          \vrule\@height\z@\@width\wd\z@}$}%
    \dp\tw@-\ht\z@
    \@tempdima\ht\z@ \advance\@tempdima2\ht\tw@ \divide\@tempdima\thr@@
    \setbox\tw@\hbox{%
       \raise\@tempdima\hbox{\scalebox{1}[-1]{\lower\@tempdima\box
\tw@}}}%
    {\ooalign{\box\tw@ \cr \box\z@}}}
\makeatother

\DeclareMathOperator{\tr}{Tr}

\DeclareMathOperator{\di}{div}

\newcommand {\R} {\mathbb{R}}

\newcommand{\vbs}{\bar{v}} % The base state
\newcommand{\cc}{\mathfrak C(\vbs,v_0)}
\newcommand{\vv}{\mathfrak U}
\newcommand{\ren}{K_{v,\vbs}} %relative energy
\newcommand{\bigk}{\mathcal K(E-\vbs,B)}
\newcommand{\bigkm}{\mathcal K(E_m-\vbs,B_m)}
\newcommand{\ev}{Q}
\newcommand{\iii}{\Pi}
\newcommand{\cci}{\mathfrak C(\iii \vbs,\iii v_0)}
\newcommand{\bigkold}{\mathcal K(\ev,B)}
\newcommand{\bigkp}{\mathcal K(E_+-\vbs,B_+)}
\numberwithin{equation}{section}
\newcommand{\ach}[1]{{#1}}
\newcommand{\dv}[1]{{#1}}
\date{}

\title[Nash system]{
On the variational dual formulation of the Nash system and an adaptive convex gradient-flow approach to nonlinear PDEs
}
\author[D.~Vorotnikov]{Dmitry Vorotnikov}
\address[D.~Vorotnikov]{University of Coimbra, CMUC, Department of Mathematics,  3000-143 Coimbra, Portugal}{}
\email{mitvorot@mat.uc.pt}
\author[A.~Acharya]{Amit Acharya}
\address[A.~Acharya]{Department of Civil \& Environmental Engineering and Center for Nonlinear Analysis, Carnegie Mellon University, Pittsburgh, 15213, PA, USA}{}
\email{acharyaamit@cmu.edu}

\begin{document}

	\begin{abstract} 
		We investigate the influence of base states on the consistency of the dual variational formulation for quadratic systems of PDEs, which are not necessarily conservative (typical examples include the noise-free Nash system with a quadratic Hamiltonian and multiple players). We identify a sufficient condition under which consistency holds over large time intervals. In particular, in the single-player case, there exists a sequence of base states (each exhibiting full consistency) that converges in mean to zero. We also prove existence of variational dual solutions to the noise-free Nash system for arbitrary base states. Furthermore, we propose a scheme based on Hilbertian gradient flows that, starting from an arbitrary base state, generates a sequence of new base states that is expected to converge to a solution of the original PDE.
	\end{abstract}
	
	\maketitle
	
	Keywords: convex duality, generalized optimal transport, gradient flow
	
	\textbf{MSC [2020] 35D99, 35L40, 37K58, 41A60, 49Q99}

	\section{Introduction}
	
	\subsection{Nash system} Consider the noise-free Nash system with the quadratic Hamiltonian and $N$ players, cf. \cite{Hy21}:  
	\be \label{e:hje1} -\partial_\tau \psi_i+\frac 1 2 |\partial_{x_i}\psi_i|^2+\sum_{j\neq i}\partial_{x_j}\psi_j \partial_{x_j}\psi_i=0 ,\ i=1,\dots, N.\ee 
	Here\footnote{The Einstein summation convention is not used here but will be adopted in Section~\ref{sec:dual_grad_flow_setting}.} the unknown function is $\psi=\psi(\tau,x)$, where $\tau \in [0,T]$ and $x$ belongs to the periodic box $\mathbb T^N$. The system is complemented with the terminal condition \be\psi(T,x)=\psi_*(x).\ee
	More generally, if the states of the players are $p$-dimensional, we have the following system, cf. \cite{CLions}: 
	
	\be \label{e:hjed}  -\partial_\tau \psi_i+\sum_{l=1}^p\left[\frac 1 2 |\partial_{x_{il}}\psi_{i}|^2+\sum_{j\neq i}\partial_{x_{jl}}\psi_j \partial_{x_{jl}}\psi_i\right]=0 ,\ i=1,\dots, N.\ee
	Here $\tau \in [0,T]$ and $x$ is in the periodic box $\mathbb T^{N \times p}$.

	Then \eqref{e:hjed} --- as well as its particular case \eqref{e:hje1} --- can be written in the form
	\be -\partial_\tau \psi+\vv (\nabla \psi \otimes \nabla \psi)=0, \ \psi(T,x)=\psi_*(x),\ee where $\vv:\mathbb R^{n\times n}_s\to \mathbb R^{N}$ is a finite-dimensional linear operator (hereafter $n=pN^2$). 
	More explicitly, the elements of $\R^{n\times n}_s$ can be represented in the form $A=\{A_{ijk,hqr}\}$, where $i,j,h$ and $q$ vary between $1$ and $N$, whereas $k$ and $r$  vary between $1$ and $p$; then $\vv$ acts according to \be \label{e:opu} (\vv A)_i=\frac 1 2  \sum_{l=1}^d\left[A_{iil,iil}+\sum_{j\neq i}(A_{jjl,ijl}+A_{ijl,jjl})\right].\ee
	
	Setting $t:=T-\tau$, and slightly abusing the notation by letting $\psi(t,x):=\psi(\tau,x)$, we rewrite the problem in the form 
	\be \label{e:mainhj} \partial_t \psi+\vv (\nabla \psi \otimes \nabla \psi)=0, \ \psi(0,x)=\psi_*(x).\ee
	
	The particular case $N=1$ and $\vv=\frac 1 2 \tr$ corresponds to the classical quadratic Hamilton-Jacobi (H-J) equation.
	
	To the best of our knowledge, \textbf{no solvability results are currently available for the noise-free Nash system} with 
	$N
	>
	1$, cf. \cite{Hy21}. This absence is related to the lack of a comparison principle for H–J systems. In particular, there is no analogue of viscosity solutions for the noise-free Nash system. Weak solutions in the sense of distributions can still be defined (cf. Definition \ref{d:ws}), though, of course, without any expectation of uniqueness. Furthermore, due to the lack of compactness, no existence result for such solutions is currently known. Finally, no selection criterion has been established to address the non-uniqueness phenomenon, although vanishing viscosity may provide a possible approach.
	
	Beyond the natural applications to game theory of the H-J systems, another primary motivation  of our study comes from the modern theory of mechanics of defects in continuum mechanics, with many outstanding classical as well as cutting-edge applications in engineering and science. The following set of PDEs is a `simplest' model for the dissipative dynamics of localized, non-singular dislocation line defects and cracks in elastic solids and geophysical rupture \cite{zhang2015single, arora_1,zhang2017non, morin2021analysis, acharya2024coupled}: given a domain $\Omega \subset \R^3$, $0 \leq \veps \ll 1$, and a multi-well, non-convex $f: \R^{3 \times 3} \to \R$, we look for $P:\Omega  \times [0,T] \to \R^{3 \times 3}$, $\theta:\Omega \times [0,T] \to \R^3$ that solve\footnote{Notation: $\left(X[AB]\right)_i = \sum_{j,k,M}\epsilon_{ijk} A_{jM}B_{Mk}$; $(\curl P)_{Mi} = \sum_{j,k}\epsilon_{ijk} \p_j P_{Mk}$, where $\epsilon_{ijk}$ is the Levi-Civita symbol.} 
	\begin{equation*}
		\begin{aligned}
			\p_t P & = - \operatorname{curl} P \times X \left[ \Big( - (\nabla \theta - P) + \veps \operatorname{curl}\operatorname{curl} P + \p_P f(P) \Big)^T \operatorname{curl} P \right],\\\p_{tt} \theta & = \operatorname{div}(\nabla \theta - P),
		\end{aligned}
	\end{equation*} 
	with appropriate initial and boundary conditions (see \cite{ach2} and earlier references therein for a model without simplifying assumptions). Particular physically meaningful ans\"atze often allow assuming $P$ to have $N$ non-zero components, $N \leq 9$, possibly varying in $d \leq 3$ essential directions in space (similar reductions can be implemented for the field $\theta$ as well). For $N = 1$, $d = 1,2$, robust computational techniques have been developed for this set of equations generating useful physical insights into defect dynamics in elastic solids \cite{zhang2015single}, cracks \cite{morin2021analysis}, nematic liquid crystals \cite{zhang2017non}, and geophysical rupture dynamics \cite{zhang2015single, arora_1}. For $N > 1$ this is a H-J-like system,  
	and, to our knowledge, no theory or numerical schemes are available for it --- see \cite{acharya2023vector} for a first analysis in the static and \cite{AT11,AS23} in the dynamic settings. The dual methodology studied in this paper marks the beginning of a rigorous analysis of such dynamical models; for a model and analysis of plasticity from dislocations in the setting of integral currents, see \cite{Rind25}, and within a phase-field setting see \cite{Mon_etal_06} as a representative of the work of Monneau and co-workers related to dislocations and H-J type equations. In the practical realm, the variational structure and the gradient flow scheme that we will propose already has the significant advantage of suggesting a natural finite dimensional Rayleigh-Ritz discretization for a multidimensional system involving nonlinear transport which does not easily, if at all, lend itself to discretization based on existing numerical algorithms.
	
	\subsection{Variational dual formulations: an overview} In this paper, we are concerned with the variational dual formulation of such nonlinear systems, with an emphasis on the noise-free Nash system. The idea originates with Brenier \cite{CMP18, Br20}, who proposed, for the incompressible Euler equations and related models, to study solutions that minimize the Lagrangian action, i.e., the time integral of the kinetic energy (for recent applications of the least action principle as a selection criterion for systems of hyperbolic conservation laws, see \cite{gimperlein_etal_24, gimperlein_etal_25}). Although this minimization problem may fail to have a solution, it leads to a dual problem with more favorable convexity properties. Brenier derived an explicit relation linking smooth solutions of the incompressible Euler equations on short time intervals with the \emph{variational dual solutions} (\ach{a terminology introduced in \cite{sga})}. In \cite{BMO22}, this technique was applied to the multi-stream Euler-Poisson system. A numerical implementation for the dual variational formulation of the quadratic porous medium equation and the Burgers equation has recently been done in \cite{mirebeau_stampfli}. In \cite{V22, V25, V26b}, the first author extended Brenier's approach by identifying structures in nonlinear PDEs that allow variational dual formulations with favorable properties. These dual problems are related to optimal transport, specifically its \emph{ballistic} variant \cite{V25, mirebeau_stampfli}. Notably, the functionals arising in this context are not expected to be self-dual, so the formalism of \cite{ghoussoub2009} is not applicable.

	A similar variational dual formulation has been proposed by one of us in \cite{acharyaQAM,ach2}, and applied to the study of nonlinear PDEs in \cite{ach2,sga,ASZ24,kpa,Ach6,AGS24,AG25} and the references therein. A key feature of this approach is that the dual problem is built around a \emph{base state}, cf. Section \ref{sbs}, which can be seen as an ``initial guess'' for the solution of a PDE in question; accordingly, the kinetic energy is replaced with an appropriate auxiliary potential with positive-definite Hessian,  e.g., with the relative kinetic energy.

	The primary goal of these approaches is to define convex variational principles for conservative and non-conservative (including dissipative) physical models to facilitate their solution, whether exact or approximate (through computation). For instance, they allow, using methods of the calculus of variations, the definition and proofs of existence of variational dual solutions to the Euler-Lagrange PDE systems of physical energy functionals whose global minimizers do not exist \cite[Sec.~5]{sga}, \cite{AGS24}.  
	
	\subsection{Base states and consistency} \dv{\label{sbs} Throughout the paper, we repeatedly invoke two concepts. Rather than fixing a single formal mathematical definition, we allow their rigorous formulation to depend on the context in which they arise. They provide a framework for formulating and interpreting the results. Let us describe them heuristically. }
	
	\medskip
	
	\noindent\emph{Concept I (Base state).} \dv{The key task in the variational approach with which we are concerned is to find the critical points of a certain {\it dual }functional that underlies the minimax formulation of the dual problem. This functional is constructed by considering the equation of interest,  the {\it primal} PDE system, in a standard weak form with the place of test functions taken by ``dual'' (Lagrange multiplier) fields, along with an additional auxiliary potential which is open to design in order to facilitate the solution or approximation of the given primal system. In particular, the design of the auxiliary potential admits  the inclusion of specified functions of space and time, referred to as \emph{base states} \cite[Sec.~5]{ach2}, in order to guide solutions of the variational problem to recover those of the primal system. Formally, every critical point of any dual functional generates a solution of the primal problem, independently of the choice of the auxiliary potential. As an example of such design of the auxiliary potential,  it suffices, for equations with quadratic nonlinearities like the Nash system, to choose a quadratic functional of the difference between the primal fields and the corresponding base states, referred to as the \emph{relative energy}.  The intuition behind the notion of a base state is that it can be viewed as a preliminary approximation to the solution, possibly motivated by heuristic or other considerations. However, this interpretation is not binding. In principle, any function belonging to the same regularity class as the expected solution of the PDE under consideration (the base state may, in fact, be more regular than the solution) can be chosen as the base state. From this perspective, the base states in \cite{CMP18, Br20, V22, V25,V26b,mirebeau_stampfli} are simply zero.}

	\medskip
	
	\noindent\emph{Concept II (Consistency).} 
	\dv{After fixing a time interval and a base state, the dual problem is constructed through a suitable minimax procedure. We say that the dual formulation is \emph{consistent} if there exists a rigorous systematic correspondence between solutions of the original PDE and solutions of the dual problem. This correspondence may be realized either via the DtP map, cf. Section~\ref{sec:var_set} (see also \cite[p. 587, after (2.22)]{CMP18}), or through tailored constructions specific to conservative evolutionary systems with the base state chosen to be zero, cf. \cite{V22,V25,V26b}; see also \cite[Remark~4.5]{V22} for a comparison of the two approaches.   Although solutions of the original PDE may fail to exist for some initial data, such a correspondence should apply at least for a suitable class of initial data for which solutions do exist. A somewhat unsettling aspect of the duality scheme with the zero base state is that many \emph{consistency} results are obtained only on small time intervals, cf. \cite[Theorems 2.3 and 3.1]{CMP18}, \cite[Theorem 3.8]{V22}, \cite[Theorem 5.1]{BMO22}. In \cite{V25,V26b}, the first author succeeded in extending the consistency for conservative systems (still assuming zero base states) to large time intervals by introducing suitable time-dependent weights; as an application, he derived a variant of Dafermos principle for various PDE models that admit the variational dual formulation. On the other hand, even without time-dependent weights, the inclusion of non-zero base states in the dual variational formulation can provide global-in-time consistency. Indeed, it was recently shown in \cite{ASZ24} that, for every weak solution of the incompressible Euler or Navier-Stokes equations, there exists at least one base state and an associated dual functional with this property. In the present paper, we show that such base states are, in fact, abundant.}
	
	\medskip

	\subsection{Goals and organization of the paper} In Section \ref{s:se2}, we investigate how the deployment of base states (without any time-dependent weights) influences consistency. We identify a broad
	class of base states that ensure consistency of the dual formulation of the noise-free Nash system over any interval $[0,T]$ (Theorem \ref{t:smooth}). As a consequence, in the single-player setting ($N=1$), we build a sequence of base states --- each exhibiting full consistency --- that converges in mean to zero (Corollary \ref{c:cor}). Moreover, we show that, for 
	$N>1$, a variational dual solution to the noise-free Nash system exists and belongs to a certain $L^2$-space, without any restrictions on base states and initial data (Theorem \ref{t:ex}). 
	
	Subsequently, in Section \ref{sec:dual_grad_flow_setting} we propose a novel scheme, based on Hilbertian gradient flows and suitable for a wide class of nonlinear PDEs, that produces, from an initial base state, a sequence of new base states anticipated to converge to a solution of the original PDE. The proposed adaptive gradient flow scheme is a natural outgrowth of the practical experience with computational schemes based on Newton's method for approximating the Euler-Lagrange system of the dual variational principle. From the very first nonlinear problem solved in \cite{ka1} (Euler's ODE system for dynamics of a rigid body), it was clear that the idea of base states was crucial to the practical success of the scheme, further confirmed in \cite{ka2} in the context of (inviscid) Burgers equation (see also \cite{mirebeau_stampfli}) and in \cite{sga} for nonconvex elastostatics and dynamics in space dimension $1$). In \cite{kpa}, base state changes with step-size control in a Newton approximation scheme were used, and the possibility of a gradient flow-based approach was alluded to. The gradient flow scheme formalized here in detail constitutes a new and logically natural development of these ideas. We work in an exact setting, i.e., without approximation.
	
	Our gradient flow scheme proceeds through several stages of gradient descent in the Hilbert space of functions of $t$ and $x$, with respect to a fictitious, time-like variable 
	$\s$ along which the evolution unfolds. Transitions between stages correspond to the selection of a new base state, unambiguously determined by the scheme, without disrupting the continuity of the overall trajectory.    In Section \ref{s:toy}, using a simple toy model, we demonstrate how the switching of base states, the convergence of the scheme, and the eventual generation of a solution occur in practice. Section \ref{sec5} presents our scheme in the particular case of the noise-free Nash system.

	\section{The dual variational formulation of the noise-free Nash system. Consistency and existence of solutions.} \label{s:se2}
	
	\subsection*{Basic notation} \dv{Throughout Sections \ref{s:se2} and \ref{sec5}, $\Omega$ denotes the periodic box $\mathbb T^{N \times p}$. For brevity, write $X:=L^2(\Omega)$. We recall that $n:=pN^2$}. We use the notations $\R^{n\times n}$ and $\R^{n\times n}_s$ for the spaces of $n\times n$ matrices and symmetric matrices, resp., with the scalar product generated by the Frobenius norm.  Let $X^{n\times n}_s$ be the subspace of $X^{n\times n}$ consisting of symmetric-matrix-valued functions. The parentheses $(\cdot,\cdot)$ will stand for the scalar products in $X^n$ and $X^{n\times n}_s$. For $A,B\in X^{n\times n}_s$, we write $A\geq B$ and $A>B$ when $A-B$ is a positive-semidefinite-matrix-function and a positive-definite-matrix-function, resp. The symbol $I$ stands for the identity matrix of a relevant size. Denote by $\iii:X^n \to X^n$ the averaging operator defined by $$\iii v(x)=\int_\Omega v(y)\,dy.$$
	
	\subsection{The variational dual formulation of the Nash system} \label{sec21} We begin by presenting the variational dual formulation of the noise-free Nash system, following the approach outlined in \cite{CMP18, V22} and employing the base states introduced in \cite{ach2}.
	
	We set $v:=\nabla \phi$, $v_0:=\nabla \phi_*$, and define the differential operator $L$ by \be \label{e:opl} Lw=-\nabla (\vv w).\ee \dv{Here $w: \Omega \to \mathbb R^{n\times n}_s$ is any sufficiently regular function. By construction, $Lw:\Omega \to \mathbb R^n$}. 
	
	\dv{We argue that} a natural (Burgers-like) weak formulation of our problem \eqref{e:mainhj} consists in finding a function $v:[0,T]\to X^n$ such that \be \label{e:w1}\int_0^T \left[(v-v_0,\p_t a)+(v\otimes v, L^*a)\right]\, dt=0\ee for all sufficiently smooth vector fields $a: [0,T]\to X^n$, $a(T)= 0$. Here $$L^*=\vv^* \di$$ is the \dv{differential operator adjoint to $L$}. \textbf{Note that this weak formulation includes the information that $v$ is a gradient and the initial condition}. Indeed, multiplying   \eqref{e:mainhj} by $\di a$, integrating w.r.t. $(0,T)\times \Omega$ and observing that \be \label{e:auxid} \int_0^T (\p_t \psi, \di a)\,dt=-\int_0^T (\p_t v, a)\,dt=\int_0^T (v-v_0,\p_t a)\,dt \ee we get \eqref{e:w1}. Conversely, if $b$ is any smooth divergence-free (w.r.t. $x$) field on $[0,T]\times \Omega$, test \eqref{e:w1} with $a(t)=\int_T^t b(s)\,ds$ to obtain $\int_0^T (v-v_0,b)\, dt=0$, which yields that $v-v_0$ is a gradient, and hence $v=\nabla \psi$ for some $\psi:[0,T]\times \Omega \to \R^n$ that is determined up to adding a function of $t$. With this at hand, and $a$ being now chosen arbitrary within its range, we observe that \eqref{e:w1} implies (up to regularity issues) that $$(\psi(0)-\psi_*, \di a(0))+\int_0^T (\p_t \psi+\vv (\nabla \psi \otimes \nabla \psi), \di a)\,dt=0,$$ which means that there exists a function $g(t)$ and a constant $c$ such that $\partial_t \psi+\vv (\nabla \psi \otimes \nabla \psi)=g(t)$ and $\psi(0)=\psi_*+c$.  Since $\psi$ has been defined up to adding a function of time, without loss of generality we may assume that $g(t)\equiv 0$ and $c=0$ (otherwise replace $\psi$ with $\psi-c-\int_0^t g(s) \,ds$). 
	
	Let us now rewrite problem \eqref{e:w1} in terms of the test functions $B:=L^*a$ and $E:=\p_t a$. The link between $B$ and $E$ can alternatively be described by the condition \be \label{e:constrweak}\int_0^T \left[(B,\p_t \Psi)+(E, L \Psi)\right]\, dt=0\ee for all smooth vector fields $\Psi: [0,T]\to X^{n\times n}_s$, $\Psi(0)= 0$, cf. \cite{V22}.   In particular, this implies that $B(t,x)$ a.e. belongs to the range of the operator $L^*$.  Note that \eqref{e:w1} becomes \be \label{e:w2}\int_0^T \left[(v-v_0,E)+(v\otimes v, B)\right]\, dt=0.\ee 
	%TODO: check if this is indeed equivalent. what if we add something from the kernel of $L^*$ to $a(T)$? Seems ok because it does not influence validity of (1.6).

	Motivated by this discussion, we adopt the following definition.
	\begin{definition}[Weak solutions] Let  $v_0\in X^n$. A function $v\in L^2((0,T)\times \Omega;\R^n)$ is a \emph{weak solution} to \eqref{e:mainhj} (more precisely, to its Burgers-like formulation) if it satisfies \eqref{e:w2} for all pairs \be \label{e:be} (E,B)\in L^2((0,T)\times \Omega;\R^n)\times L^\infty((0,T)\times \Omega;\R^{n\times n}_s)\ee meeting the constraint \eqref{e:constrweak}. \label{d:ws} \end{definition}
	
	\dv{Following \cite{ach2,ach3}, we fix a \emph{base state} $\vbs\in L^2((0,T)\times \Omega;\R^n)$, as anticipated in Section \ref{sbs}. Consider the problem of finding a weak solution to \eqref{e:mainhj} that minimizes the time integral of the relative energy \begin{equation} \label{e:consp1} \ren(t):=\frac 12(v(t)-\vbs(t),v(t)-\vbs(t)).\end{equation} (We do not address the question of whether the set of weak solutions is nonempty). This task} can be implemented via the saddle-point problem  \be\label{e:sadd1}\mathcal I(v_0, \vbs, T)=\inf_{v}\sup_{E,B:\,\eqref{e:constrweak}}\int_0^T \left[\ren(t)+(v-v_0,E)+(v\otimes v, B)\right]\, dt.\ee (If a weak solution does not exist, then $\mathcal I(v_0, \vbs, T)=+\infty$). Equivalently, \be\label{e:sadd1+}\mathcal I(v_0, \vbs, T)=\inf_{v}\sup_{E,B:\,\eqref{e:constrweak}}\int_0^T \left[(v-v_0,E-\vbs)+\frac 1 2(v\otimes v, I+2B)\right]\, dt+\cc,\ee where \be \cc:=\int_0^T \left[-(v_0,\vbs)+\frac 1 2(\vbs,\vbs)\right]\, dt.\label{cccd}\ee The infimum in \eqref{e:sadd1} is taken over all $v\in L^2((0,T)\times \Omega;\R^n)$, and the supremum is taken over all pairs $(E,B)$ satisfying \eqref{e:be} and the linear constraint \eqref{e:constrweak}.
	
	The dual problem is \be\label{e:sadd2}\mathcal J(v_0, \vbs, T)=\sup_{E,B:\,\eqref{e:constrweak}}\inf_{v}\int_0^T \left[(v-v_0,E-\vbs)+\frac 1 2(v\otimes v, I+2B)\right]\, dt+\cc,\ee where $v,E,B$ are varying in the same function spaces as above.  
	
	It is easy to see that any solution to \eqref{e:sadd2} necessarily satisfies \be\label{e:bwe} I+2B\geq 0\ \mathrm{a.e.}\ \mathrm{in}\ (0,T)\times \Omega.\ee Following \cite{CMP18,V22}, consider the nonlinear functional $$\mathcal K: L^2((0,T)\times \Omega;\R^n)\times L^\infty((0,T)\times \Omega;\R^{n\times n}_s)\to \R$$ defined by the formula
	\be\label{e:defk1} \bigkold=\inf_{z\otimes z\le M}\int_0^T \left[(z,\ev)+\frac 1 2(M, I+2B)\right]\, dt,\ee  where the infimum is taken over all pairs $(z,M)\in L^2((0,T)\times \Omega;\R^n)\times L^1((0,T)\times \Omega;\R^{n\times n}_s)$. Note that $\bigkold$ is a negative-semidefinite quadratic form w.r.t. $\ev$ for fixed $B$. Mimicking \cite[Remark 3.6]{V22}, it is possible to see that \eqref{e:sadd2} is equivalent to \be\label{e:conc}\mathcal J(v_0, \vbs, T)=\sup_{E,B:\,\eqref{e:constrweak},\eqref{e:bwe}}\int_0^T \left[-(v_0,E)+\frac 1 2 (\vbs,\vbs)\right]\, dt+\bigk,\ee where the supremum is taken over all pairs $(E,B)$ belonging to the class \eqref{e:be}.
	
	\begin{remark} For the sake of generality, in what follows $v_0$ and $\vbs$ are not necessarily gradients because the dual problem is well-defined even without this restriction. \end{remark}

	\subsection{Consistency and existence of solutions}
	
	In this section, we first show that a broad class of base states ensures \emph{consistency} of the dual formulation over any interval $[0,T]$. Specifically, any matrix-valued positive semidefinite ``density'' $G(t,x)$ and the ``velocity field'' $u(t,x)$ solving the generalized “transport” equation \be \p_t G+2 \vv^* \di (Gu) =0, \quad G(T,\cdot)\equiv I \label{e:gte1}\ee define a base state \dv{for which there is no duality gap (in other words, the optimal values of the primal and dual problems, \eqref{e:sadd1} and \eqref{e:sadd2}, coincide). Moreover, for those base states, the solutions of the original problem \eqref{e:mainhj} are in explicit correspondence with the variational dual solutions.} As a corollary, in the single-player case ($N=1$), we construct a sequence
	of base states (each exhibiting full consistency and a one-to-one correspondence between the ``primal'' and ``dual'' solutions) that converges in mean to zero. Finally, we establish the existence of a variational dual solution to the noise-free Nash system for any base state $\bar v$ and $N > 1$.
	
	\begin{remark} Although our main focus is on the noise-free Nash system, we are confident that the results below can be extended to other quadratic PDEs, for instance, to those studied in \cite{CMP18,V22}. \end{remark}
	
	\begin{theorem}[Consistency] \label{t:smooth} Let $v_0\in X^n$ and $v$ be a weak solution to \eqref{e:mainhj}. Select any pair $(G,u)\in L^\infty((0,T)\times \Omega;\R^{n\times n}_s)\times L^2((0,T)\times \Omega;\R^n)$ satisfying the constraints \be\label{e:pd++g} G\geq 0\ \textrm{a.e.}\ \textrm{in}\ (0,T)\times \Omega\ee  and \be \label{e:constrweakg}\int_0^T \left[(G-I,\p_t \Psi)-2(Gu, L \Psi)\right]\, dt=0\ee for all smooth vector fields $\Psi: [0,T]\to X^{n\times n}_s$, $\Psi(0)= 0$. Assume also that \be \label{e:goodbs} \vbs=G(v-u)\ \textrm{a.e.}\ \textrm{in}\ (0,T)\times \Omega.\ee  Then $\mathcal I(v_0, \vbs, T)=\mathcal J(v_0, \vbs, T)$, and the pair $(E_+,B_+)$ defined by \be \label{e:dualsol} E_+:=-Gv+\vbs,\ B_+:=\frac 1 2 (G-I)\ee belongs to the class \eqref{e:be} and maximizes \eqref{e:conc}. In particular, if $G> 0$ a.e. in an open subset $\mathcal O \subset (0,T)\times \Omega$, we can  retrieve the weak solution $v$ (inside $\mathcal O$)  from the variational dual solution $(E_+,B_+)$  by applying the formula \be \label{e:dualsolinv} v=(I+2B_+)^{-1}(\vbs-E_+)\ \textrm{a.e.}\ \textrm{in}\ \mathcal O. \ee  \end{theorem}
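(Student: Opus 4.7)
The plan is to exhibit matching primal and dual values at the triple $(v,E_+,B_+)$, with the identity \eqref{e:w2} tested against the admissible pair $(E_+,B_+)$ serving as the bridge between the two sides of the duality.

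I first verify admissibility of $(E_+,B_+)$. Since $\vbs = G(v-u)$, a direct substitution yields $E_+ = -Gu$, so $(E_+,B_+) \in L^2\times L^\infty$ and lies in the class \eqref{e:be}. The identity $I + 2B_+ = G \geq 0$ is exactly \eqref{e:bwe}. Plugging $(E_+,B_+)$ into the left-hand side of \eqref{e:constrweak} produces, after factoring out $\tfrac12$, precisely the transport identity \eqref{e:constrweakg} satisfied by $(G,u)$; hence $(E_+,B_+)$ satisfies \eqref{e:constrweak} and is an admissible competitor for the supremum in \eqref{e:conc}. In particular, \eqref{e:w2} may be tested against it.

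Next I compute the dual objective at $(E_+,B_+)$ via the sup-inf form \eqref{e:sadd2}. For each fixed $(t,x)$, the inner infimum over $w \in \R^n$ of $(w,-Gv) + \tfrac12(Gw,w)$ equals $-\tfrac12(Gv,v)$: diagonalizing $G$, the kernel directions contribute zero while each positive-eigenvalue coordinate is minimized by matching the corresponding component of $v$; this step uses only that the argument $-Gv$ automatically lies in the range of $G$, and so it works under the bare hypothesis $G \geq 0$. Adding $-(v_0, E_+ - \vbs) = (v_0, Gv)$ and the constant $\cc$, and simplifying $(v_0,Gv) - (v_0,\vbs) = (v_0,Gu)$, the objective reduces to $\int_0^T\bigl[-\tfrac12(Gv,v) + (v_0,Gu) + \tfrac12(\vbs,\vbs)\bigr]\,dt$. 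The crucial ingredient is now the scalar identity obtained by testing \eqref{e:w2} against $(E_+,B_+)$,
\[
\int_0^T \bigl[ -(v,Gu) + (v_0,Gu) + \tfrac12(v,Gv) - \tfrac12(v,v) \bigr]\,dt = 0.
\]
Substituting this and using $(v,Gu) - (v,Gv) = -(v,\vbs)$ rewrites the dual objective as $\tfrac12 \int_0^T (v-\vbs,v-\vbs)\,dt = \int_0^T \ren(t)\,dt$, so $\mathcal J(v_0,\vbs,T) \geq \int_0^T \ren(t)\,dt$.

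For the matching upper bound on $\mathcal I$, I would use $v$ itself in the outer infimum of \eqref{e:sadd1}: by \eqref{e:w2}, the bracket $(v-v_0,E) + (v\otimes v, B)$ has zero integral for every admissible $(E,B)$, so the inner supremum collapses to $\int_0^T \ren(t)\,dt$ and $\mathcal I(v_0,\vbs,T) \leq \int_0^T \ren(t)\,dt$. Combined with the always-valid weak duality $\mathcal J \leq \mathcal I$, this gives $\mathcal I = \mathcal J = \int_0^T \ren(t)\,dt$, with the supremum in \eqref{e:conc} attained at $(E_+,B_+)$. The retrieval formula \eqref{e:dualsolinv} then reduces to the algebraic identity $G^{-1}(\vbs - E_+) = G^{-1}(Gv) = v$ on the open set where $G > 0$. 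The main delicate point I anticipate is the inner pointwise minimization in the degenerate case when $G$ has a nontrivial kernel: a naive use of $G^{-1}$ is unavailable, and one must instead exploit that the specific argument $-Gv$ automatically lies in the range of $G$. This is exactly what lets the theorem be stated under the weak hypothesis $G \geq 0$ rather than only under $G > 0$.
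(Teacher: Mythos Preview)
Your proof is correct and follows essentially the same route as the paper's: verify admissibility of $(E_+,B_+)$, compute the dual value there to be $\int_0^T \ren(t)\,dt$ using the pointwise quadratic minimization together with the weak formulation \eqref{e:w2} tested against $(E_+,B_+)$, and close with the trivial upper bound $\mathcal I\le \int_0^T \ren(t)\,dt$ and weak duality. The only cosmetic difference is that you evaluate the dual objective directly from the sup--inf form \eqref{e:sadd2}, whereas the paper routes the same computation through the equivalent functional $\mathcal K$ in \eqref{e:conc}; your handling of the degenerate case $G\ge 0$ (noting that $-Gv$ lies in the range of $G$) makes explicit what the paper's one-line evaluation of the quadratic infimum leaves implicit.
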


	\begin{proof} By construction, $\vbs \in L^2((0,T)\times \Omega;\R^n)$. Consequently, the pair $(E_+,B_+)$ belongs to $L^2((0,T)\times \Omega;\R^n)\times L^\infty((0,T)\times \Omega;\R^{n\times n}_s)$. Moreover, it follows from \eqref{e:constrweakg} and \eqref{e:goodbs} that the pair $(E_+,B_+)$  \dv{satisfies  \eqref{e:constrweak}. In addition, since $I+2B_+=G$, \eqref{e:pd++g} implies \eqref{e:bwe} for $B_+$}.

		By \eqref{e:w2} and \eqref{e:sadd1}, $$\mathcal I(v_0, \vbs, T)\leq \int_0^T \ren(t)\, dt.$$ In view of \eqref{e:conc}, in order to prove that there is no duality gap --- and that $(E_+,B_+)$ is a variational dual solution --- it suffices to show that \be \label{e:claim1} \int_0^T -(v_0,E_+)\, dt+\bigkp \ge \int_0^T \left[\ren(t)-\frac 1 2 (\vbs,\vbs)\right]\, dt.\ee 
		But since $v$, in particular, satisfies \eqref{e:w2} with the test functions $(E_+,B_+)$, we have \be \label{e:w2+}\int_0^T \left[(v-v_0,E_+)+(v\otimes v, B_+)\right]\, dt=0.\ee Hence, by the first  definition in \eqref{e:dualsol}, \be \int_0^T \left[-(v_0,E_+)+(v\otimes v, B_+)\right]\, dt=\int_0^T \left[(v\otimes v, G)-(v,\vbs)\right]\, dt,\ee which in view of the second definition in \eqref{e:dualsol} gives  \be \label{e:w3+}-\int_0^T \left[(v_0,E_+)+(v\otimes v, B_+)\right]\, dt=\int_0^T \left[(v\otimes v, I)-(v,\vbs)\right]\, dt.\ee
		Consequently,  \begin{multline*} \int_0^T -(v_0,E_+)\, dt+\bigkp\\=\int_0^T -(v_0,E_+)\, dt+\inf_{z\otimes z\le M}\int_0^T \left[(z,-Gv)+\frac 1 2(M, I+2B_+)\right]\, dt \\\ge \int_0^T -(v_0,E_+)\, dt+\inf_{z\in L^2((0,T)\times \Omega;\R^n)}\int_0^T \left[-(z,(I+2B_+)v)+\frac 1 2(z\otimes z, I+2B_+)\right]\, dt\\=\int_0^T -\left[(v_0,E_+)+\frac 12((I+2B_+)v,v)\right]\, dt\\
			=\int_0^T \left[(v\otimes v, I)-(v,\vbs)-\frac 12(v,v)\right]\, dt=\int_0^T \left[\ren(t)-\frac 1 2 (\vbs,\vbs)\right]\, dt. \end{multline*} 
		
		If $G> 0$ a.e. in an open set $\mathcal O$, then \eqref{e:dualsol} obviously yields \eqref{e:dualsolinv}.
	\end{proof}
	
	\begin{remark} Constraints \eqref{e:pd++g} and \eqref{e:constrweakg} may be interpreted as a weak form of a generalized ``transport'' equation \eqref{e:gte1} where the matricial positive-semidefinite ``density'' $G(t,x)$ and the ``velocity field'' $u(t,x)$ may be freely chosen. The simplest pair satisfying \eqref{e:pd++g}, \eqref{e:constrweakg} is $(G,u)\equiv (I,0)$, which yields $\bar v=v$. In this connection, we note that the consistency of the dual formulation of the incompressible Euler equations when the base state coincides with the solution was established in \cite[Theorem 3.6]{ASZ24}.  \end{remark}
	
	\begin{corollary} \label{c:cor} Let $N=1$ (and hence $n=p$), and let $v$ be the gradient of the viscosity solution $\psi$ to the Hamilton-Jacobi equation \eqref{e:mainhj}. Assume that $v_0$ and $v$ are continuous in $\Omega$ and $[0,T]\times \Omega$, resp. Then there exists a sequence $\{\vbs_m\}$ of continuous base states with the following properties: \\

		i) $\vbs_m\to 0$ strongly in $L^1((0,T)\times \Omega;\R^n)$;\\
		
		ii) for every $m\in \mathbb N$, $\mathcal I(v_0, \vbs_m, T)=\mathcal J(v_0, \vbs_m, T)$;\\
		
		iii) for every $m\in \mathbb N$, the dual problem \eqref{e:conc} with $\vbs=\vbs_m$ has a smooth solution $(E_m,B_m)$  satisfying $I+2B_m>0$  in $(0,T)\times \Omega$;\\
		
		iv) the gradient  $v$ of the solution to the Hamilton-Jacobi equation can be fully retrieved from the variational dual solutions  according to the formula \be v=(I+2B_m)^{-1}(\vbs_m-E_m). \ee

	\end{corollary}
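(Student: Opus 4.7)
The plan is to invoke Theorem~\ref{t:smooth} with a carefully chosen family $(G_m, u_m)$ of the scalar form $G_m = g_m I$. For $N=1$ we have $\vv = \tfrac{1}{2}\tr$, whence $\vv^*\!\di(gu) = \tfrac{1}{2}\di(gu)\,I$, so the constraint \eqref{e:constrweakg} with the ansatz $G = gI$ reduces to the standard continuity equation $\partial_t g + \di(gu) = 0$, $g(T,\cdot)\equiv 1$.

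Concretely, I would first fix a nonnegative space-time mollifier $\rho_m$ concentrating at the origin as $m\to\infty$, extend $v$ continuously in time outside $[0,T]$ (say by constants), and set $u_m := v*\rho_m$. Uniform continuity of $v$ on $[0,T]\times\Omega$ gives $u_m \to v$ uniformly. Next, solve $\partial_t g_m + \di(g_m u_m) = 0$ backward from $g_m(T,\cdot)\equiv 1$ by the method of characteristics for the smooth velocity $u_m$; this yields a smooth, strictly positive $g_m$ on $[0,T]\times\Omega$. Define
\[
G_m := g_m I, \qquad \vbs_m := g_m(v - u_m),
\]
which is continuous and bounded.

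Hypotheses \eqref{e:pd++g}--\eqref{e:goodbs} of Theorem~\ref{t:smooth} are then immediate: $G_m > 0$; the weak form \eqref{e:constrweakg} follows from the strong PDE by integration by parts, using $\Psi(0)=0$ and the terminal condition $G_m(T)=I$; and \eqref{e:goodbs} is our very definition of $\vbs_m$. The theorem therefore delivers the consistency (ii), and produces the explicit smooth dual optimum
\[
(E_m, B_m) \;=\; \bigl(-g_m u_m,\ \tfrac{1}{2}(g_m - 1)\, I\bigr),
\]
with $I + 2B_m = g_m I > 0$ everywhere on $(0,T)\times\Omega$, so (iii) holds, and formula \eqref{e:dualsolinv} applies on all of $(0,T)\times\Omega$ to recover $v$, giving (iv).

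For (i), the decisive ingredient is mass conservation in the continuity equation on the periodic box: for every $t\in[0,T]$, $\int_\Omega g_m(t,\cdot)\,dx = \int_\Omega g_m(T,\cdot)\,dx = |\Omega|$. Hence
\[
\|\vbs_m\|_{L^1((0,T)\times\Omega)} \;\leq\; \int_0^T \|g_m(t,\cdot)\|_{L^1_x}\,\|v(t,\cdot) - u_m(t,\cdot)\|_{L^\infty_x}\,dt \;\leq\; T|\Omega|\,\|v - u_m\|_{L^\infty} \;\to\; 0.
\]
The main point to watch is that $\|g_m\|_{L^\infty}$ may well blow up as $m\to\infty$ (since $\|\di u_m\|_\infty$ can), so a naive $L^\infty$ control of $\vbs_m$ is not available; one genuinely needs the \emph{uniform} $L^1_x$ bound on $g_m$ afforded by mass conservation to offset the merely uniform $L^\infty$ approximation $u_m\to v$. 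Once that is in place the entire argument is a transparent consequence of the consistency theorem.
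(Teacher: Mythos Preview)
Your proposal is correct and follows essentially the same route as the paper: choose smooth $u_m\to v$ uniformly, solve the scalar continuity equation backward from $g_m(T)\equiv 1$, set $G_m=g_m I$ and $\vbs_m=g_m(v-u_m)$, and invoke Theorem~\ref{t:smooth}; the paper likewise exploits mass conservation (obtained there by testing \eqref{e:constrweakg} with $\Psi=tI$) to control $\|\vbs_m\|_{L^1}$ despite the lack of uniform $L^\infty$ bounds on $g_m$. The only cosmetic differences are that you construct $u_m$ explicitly by mollification and phrase mass conservation slice-by-slice rather than as a global $L^1$ identity.
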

	
	\begin{proof} Since $v$ is continuous in $[0,T]\times \Omega$, there exists a sequence $\{u_m\}$ of smooth vector fields on $[0,T]\times \Omega$ that converges to $v$ uniformly. Let $\rho_m:[0,T]\times \Omega\to \R$ be the smooth solution of the linear transport equation \dv{\be \p_t \rho_m+\di (\rho_m u_m)=0 \label{e:trans}\ee} that satisfies the terminal condition \be \rho_m(T,x)=1. \label{e:ttc} \ee Since each $u_m$ is smooth and $\frac d {dt} \rho_m(t)=\rho_m(t)\di u_m$ along any corresponding characteristic, $\rho_m$ is bounded away from $0$ and $+\infty$ (not uniformly in $m$). Set $G_m:=\rho_m I$, and fix a smooth matrix field $\Psi: [0,T]\to X^{n\times n}_s$, $\Psi(0)= 0$. Multiply \eqref{e:trans} by $\tr \Psi$, and integrate by parts over $(0,T)\times \Omega$ to  obtain  \be \label{e:constrweakgrho}\int_0^T \left[((\rho_m-1)I,\p_t \Psi)+(G_m u_m, \nabla \tr \Psi)\right]\, dt=0.\ee In particular, letting $\Psi(t,x)=tI$ gives $$\|\rho_m\|_{L^1((0,T)\times \Omega)}=\|1\|_{L^1((0,T)\times \Omega)}.$$
		Remembering that for $N=1$ we have $L=-\nabla \vv=-\frac 1 2 \nabla \tr$, we conclude that the pairs $(G_m,u_m)$ satisfy the constraints \eqref{e:pd++g} and \eqref{e:constrweakg}  of Theorem \ref{t:smooth}. Let $\vbs_m:=\rho_m(v-u_m)=G_m(v-u_m)$. Then \begin{multline*}\|\vbs_m\|_{L^1((0,T)\times \Omega;\R^n)}\leq \|\rho_m\|_{L^1((0,T)\times \Omega)} \|u_m-v\|_{L^\infty((0,T)\times \Omega;\R^n)}\\ = \|1\|_{L^1((0,T)\times \Omega)} \|u_m-v\|_{L^\infty((0,T)\times \Omega;\R^n)} \to 0.\end{multline*} It remains to apply Theorem \ref{t:smooth} after having observed that  $G_m>0$ everywhere in   $\mathcal O:=(0,T)\times \Omega$ and that the functions $E_m:=-G_m v+\vbs_m=-\rho_m u_m$, $B_m:=\frac 1 2 (G_m-I)=\frac 1 2 (\rho_m-1)I$ are smooth.
	\end{proof}
	\begin{remark} \label{r:vrm} In the pioneering work \cite{CMP18}, Brenier considered the variational dual formulation (with zero base state) for the quadratic Burgers equation --- or, equivalently, to the one-dimensional quadratic H–J --- which corresponds to $n=N=p=1$ and $\bar{v}=0$ in our framework. He shows explicitly that, in that setting, the dual solution $(\rho v,\rho)$ is merely a vector-valued Radon measure (see also Remark \ref{r:rrm} below, which explains the loss of regularity) and that the support of the measure $\rho$ is strictly smaller than $[0,T]\times \Omega$; here $\rho$ can be interpreted as the probability measure transported by $v$ whose marginal at time $T$ is the Lebesgue measure on $\Omega$ (cf. the proof of Corollary \ref{c:cor}). Consequently, $v$ cannot be recovered as the Radon-Nikodym derivative of $\rho v$ w.r.t. $\rho$ outside of the support of $\rho$. Corollary \ref{c:cor} shows that a small perturbation of the base state $\bar{v}=0$ can dramatically improve the regularity of the dual solution and secure one-to-one correspondence between $v$ and the variational dual solution. The same observations apply in the case $N=1$ with $p>1$; under these conditions, the existence of a variational dual solution to H-J with $\bar{v}=0$ --- again a vector-valued Radon measure  --- follows from \cite[Theorem 4.6]{V22}.   \end{remark}
	\begin{remark} The assumption that $v$ is continuous in Corollary \ref{c:cor} is, of course, restrictive, as it rules out the formation of shocks.  Nevertheless, if $v$ is merely essentially bounded, we can take \emph{any} smooth vector field $u_m$ and solve the transport equation \eqref{e:trans} with the terminal condition \eqref{e:ttc}. Then the dual formulation with the (essentially bounded) base state $\vbs_m:=\rho_m(v-u_m)$ is fully consistent (in particular, there is a one-to-one correspondence between the ``primal'' and ``dual'' solutions and no duality gap). Hence, the class of “good’’ base states for $v$ with shocks is still very large.\end{remark}

	We recall the following technical definition, cf. \cite{V22}, that is needed to handle the solvability of the dual problem: \begin{definition} \label{deftr} The operator $L$ is said to satisfy the \emph{trace condition} if the following holds: for any $\zeta\in D(L^*)$ such that  the eigenvalues of the matrix $-L^* \zeta(x)$  are uniformly bounded from above by a constant $k$ for a.e. $x\in \Omega$, the eigenvalues of the matrix $L^* \zeta(x)$ are also uniformly bounded from above a.e. in $\Omega$ by a constant that depends only on $k$, and not on $\zeta$ or on $x$.\end{definition}

	\begin{lemma}[Validity of the trace condition] \label{l:tc} Assume that $N>1$. Then the operator $L$ defined by \eqref{e:opl} satisfies the trace condition.  \end{lemma}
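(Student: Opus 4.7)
The plan is to work pointwise in $x$. Fix $x \in \Omega$ and set $b := \di\,\zeta(x) \in \R^N$, so that $L^*\zeta(x) = \vv^* b$. The hypothesis then reads $\vv^* b \geq -kI$ in the sense of symmetric matrices, and the goal is to produce a bound $\vv^* b \leq K(k)\, I$ with $K$ depending only on $k$.

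First, I would compute $\vv^*$ explicitly from \eqref{e:opu} together with the symmetry of $\R^{n\times n}_s$. The resulting symmetric matrix $\vv^* b$ turns out to be extremely sparse: up to symmetry, its only non-zero entries are $(\vv^* b)_{iil,iil} = b_i/2$ and $(\vv^* b)_{jjl,ijl} = b_i/2$ for $i \neq j$. In particular $\vv^* b$ is block-diagonal over the index $l$ with $p$ identical $N^2 \times N^2$ blocks, and inside each block the ``off-diagonal--off-diagonal'' corner (rows and columns indexed by pairs $(i,j)$ with $i \neq j$) vanishes identically. This vanishing corner is what drives the whole argument.

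The core step is then to extract pointwise constraints on the entries $b_i$ from $\vv^* b \geq -kI$ by plugging in simple test vectors. Testing on $v = e_{(i,j,l)}$ with $i \neq j$ immediately gives $0 \geq -k$, so $k \geq 0$. Testing next on the two-parameter family $v = e_{(j,j,l)} + \alpha\, e_{(i,j,l)}$ (with $i \neq j$ and $\alpha \in \R$), and using the vanishing corner, the quadratic form collapses to $b_j/2 + \alpha b_i$, yielding the scalar inequality
\be k\alpha^2 + b_i \alpha + (k + b_j/2) \geq 0 \quad \text{for all } \alpha \in \R. \ee
For $k > 0$ the non-positive-discriminant condition produces
\be b_i^2 \leq 4k^2 + 2k b_j \quad \text{for every } i \neq j, \ee
while the boundary case $k = 0$ collapses to a linear inequality in $\alpha$ that forces $b = 0$. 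Here the assumption $N > 1$ enters decisively: for every $i_0$ there exists $j \neq i_0$, so the constraint is non-vacuous. Choosing $i_0$ with $|b_{i_0}| = \|b\|_\infty =: M$ and any $j \neq i_0$ gives, since $b_j \leq M$, the estimate $M^2 \leq 4k^2 + 2kM$, hence $\|b\|_\infty \leq k(1 + \sqrt{5})$. A crude Gershgorin bound applied to the explicit sparse structure of $\vv^* b$ then controls all of its eigenvalues by a constant depending only on $k$, $N$ and $p$, which is exactly what is required.

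The main obstacle I anticipate is not conceptual but notational: pinning down $\vv^*$ exactly --- signs, symmetrisation, and the precise positions of the non-zero entries --- is the only delicate piece. Once that bookkeeping is in place, the vanishing off-diagonal--off-diagonal corner makes the test-vector computation essentially one line, and the role of $N > 1$ becomes transparent: it is precisely the feature that fails for $N = 1$, where $\vv = \frac 1 2 \tr$ and $L^*\zeta = \frac{\di\zeta}{2}\, I$ has eigenvalues $\frac{\di\zeta}{2}$ which admit no two-sided control.
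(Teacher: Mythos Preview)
Your approach is correct and essentially mirrors the paper's: both reduce to the pointwise statement for $\vv^*$, compute its sparse structure, and extract the same key inequality $b_i^2 \le 4k^2 + 2k\,b_j$ for $i\neq j$ from a two-dimensional principal submatrix (the paper via the $2\times 2$ minor on indices $(iil),(ijl)$, you via the equivalent test-vector/discriminant formulation). The only cosmetic difference is the closing step --- the paper bounds $\tr \vv^* y$ and combines this with the lower eigenvalue bound $-k$, whereas you bound $\|b\|_\infty$ directly and finish with Gershgorin; both are elementary and yield a constant depending only on $k$ (and $N,p$).
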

	
	\begin{proof} It suffices to prove that $\vv$ satisfies the trace condition. Using \eqref{e:opu}, for any element $y\in \R^N$ we explicitly compute \begin{gather*}
			(\vv^*(y))_{ijl,iil}=(\vv^*(y))_{iil,ijl}=\frac 12 y_j,\\ (\vv^*(y))_{ijl,hqr}=0\ \mathrm{for}\ \mathrm{all}\ \mathrm{other}\ \mathrm{entries}. 
		\end{gather*} Consequently, \be \label{e:trc} \tr \vv^*(y)=\frac p 2 \sum_{i=1}^N y_i. \ee
		Let $k\ge 0$ be such that $kI+\vv^*(y)\geq 0$. This entails, for every pair of indices $i \neq j$, that $$\left(\begin{array}{@{}c|c@{}}
			2k+ y_i& y_j \\ \hline y_j &
			2k \end{array}\right)=2\left(\begin{array}{@{}c|c@{}}
			k+(\vv^*(y))_{iil,iil} & (\vv^*(y))_{iil,ijl} \\ \hline (\vv^*(y))_{ijl,iil} &
			k+(\vv^*(y))_{ijl,ijl} \end{array}\right)\ge 0.$$ Hence, \be (y_j)^2\leq 4k^2+2k y_i, \quad i \neq j. \ee  It follows by some elementary algebra that the trace \eqref{e:trc} is bounded from above by a constant that merely depends on $k$, and not on $y$. This entails that the eigenvalues of $\vv^*(y)$ are also uniformly controlled from above. 
	\end{proof}

	\begin{theorem}[Existence of a variational dual solution] \label{t:ex} Assume that $N>1$. Then for any $v_0\in X^n$ and $\vbs\in L^2((0,T)\times \Omega;\R^n)$ there exists a maximizer $(E,B)$ to \eqref{e:conc} in the class \eqref{e:be}, and\footnote{\dv{Here, $\iii$ is the averaging operator defined at the beginning of Section \ref{s:se2}, and the functional $\mathfrak C$ is defined, mutatis mutandis, as in \eqref{cccd}.} } $\cci \leq \mathcal J(v_0, \vbs, T)< +\infty$.  \end{theorem}

	\begin{proof}  It suffices to consider the pairs $(E,B)$ that meet the restrictions \eqref{e:constrweak}, \eqref{e:bwe}. In particular, the pair $(E,B)=(\iii \vbs,0)$ satisfies \eqref{e:constrweak}, \eqref{e:bwe} and hence is admissible. (Indeed, for this pair $\int_0^T \left[(B,\p_t \Psi)+(E, L \Psi)\right]\, dt=\int_0^T (\di \iii \vbs, \vv \Psi)\, dt=0$ for all smooth  $\Psi: [0,T]\to X^{n\times n}_s$.) Testing \eqref{e:conc} with this pair, and remembering that $|\Omega|=1$, we see that \begin{multline*}\mathcal J(v_0, \vbs, T)\geq \int_0^T \left[-(v_0,\iii \vbs)+\frac 1 2 (\vbs,\vbs)\right]\, dt+\mathcal K(\iii\vbs- \vbs,0)\\ = \int_0^T \left[-(\iii v_0,\iii \vbs)+\frac 1 2 (\vbs,\vbs)\right]\, dt-\frac 1 2 \int_0^T (\vbs-\iii \vbs,\vbs- \iii \vbs)\, dt\\=\int_0^T \left[-(\iii v_0,\iii \vbs)+\frac 1 2 (\iii \vbs,\iii \vbs)\right]\, dt=\cci.\end{multline*} Let $(E_m,B_m)$ be a maximizing sequence.  Since $\cci \le \mathcal J(v_0, \vbs, T)$, without loss of generality we may assume that \be\label{e:ms}\cci \le \int_0^T \left[-(v_0,E_m)+\frac 1 2 (\vbs,\vbs)\right]\, dt+\bigkm.\ee The eigenvalues of $-B_m$ are uniformly bounded from above because $I+2B_m\geq 0$. Consequently, Lemma \ref{l:tc} and Definition \ref{deftr} yield  $I+2B_m\leq kI$ with some constant $k>0$ a.e. in $(0,T)\times\Omega$. By the definition of $\mathcal K$ in \eqref{e:defk1}, we have \be \label{e:ms1} \bigkm\leq  \inf_{z\otimes z\le M}\int_0^T\left[(z,E_m-\vbs)+\frac k 2(M, I)\right]\,dt=-\frac 1{2k}\int_0^T (E_m-\vbs,E_m-\vbs)\,dt.\ee We infer that 
		\begin{multline} \label{e:ms2} \frac 1{4k} \int_0^T(E_m,E_m) \, dt \leq \frac 1{2k} \int_0^T(E_m-\vbs,E_m-\vbs)\, dt +\frac 1{2k} \int_0^T(\vbs,\vbs)\, dt\\ \leq -\int_0^T (v_0,E_m)\, dt+\frac {k+1}{2k} \int_0^T (\vbs,\vbs) \, dt-\cci\\ \leq \frac {k+1}{2k} \int_0^T[(\vbs,\vbs)+2k(v_0,v_0)]\, dt-\cci+\frac 1{4(k+1)} \int_0^T(E_m,E_m),\end{multline} which gives a uniform $L^2((0,T)\times \Omega;\R^n)$-bound on $E_m$. Taking into account \eqref{e:ms1} and \eqref{e:ms2}, we infer that the right-hand side of \eqref{e:ms} is uniformly bounded, whence $\mathcal J(v_0, \vbs, T)<+\infty$. The functional $\mathcal K$ is concave and upper semicontinuous on $L^2((0,T)\times \Omega;\R^n)\times L^\infty((0,T)\times \Omega;\R^{n\times n}_s)$ as an infimum of affine continuous functionals, cf.  \eqref{e:defk1}. The functional $\int_0^T (v_0,\cdot)\, dt$ is a linear bounded functional on $L^2((0,T)\times \Omega;\R^n)$. Consequently, every weak-$*$ accumulation point of $(E_m,B_m)$ is a maximizer of \eqref{e:conc}. Note that the constraints \eqref{e:constrweak}, \eqref{e:bwe} are preserved by the limit. 
	\end{proof}
	
	\begin{remark} The trace condition is not valid for $N=1$, cf. \cite{V25}, which explains why, as already mentioned in Remark \ref{r:vrm}, for $N=1$ and $\bar{v}=0$ the variational dual solution to the quadratic H–J is merely a vector Radon measure and does not belong to the class \eqref{e:be}. \label{r:rrm}\end{remark}

	\section{The utility of base states in practical considerations: A formal convex gradient flow scheme}\label{sec:dual_grad_flow_setting}

	In this section we describe a formal scheme for obtaining weak solutions to a class of PDEs; the class includes many equations of continuum mechanics. The scheme is quite generally applicable, up to minor adjustments, including the case of the Nash system, see Section \ref{sec5} below. Comparable adaptive numerical schemes employing variational dual solutions and adjustable base states --- but not based on the gradient flow idea developed here --- for solving nonlinear differential equations have recently been demonstrated in  \cite{ka1,sga,ka2,kpa}. These methods were developed in connection with the nonlinear system of ODEs of Euler for the dynamics of a rigid body, nonconvex elastostatic and dynamics of a bar (without regularization), (inviscid) Burgers equation, and the problem of traveling waves of a dispersive, nonlocal, nonlinear semi-discrete Burgers equation.
	
	\subsection{Variational setup} \label{sec:var_set}
	Following \cite{ach3} for the notation and setup of the problem, let lower-case Latin indices belong to the set $\{1,\cdots, m\}$ representing rectangular Cartesian spatial coordinates, $t$ is time, and we will employ the Einstein summation convention. Let upper-case Latin indices belong to the set $\{1,2,3, \cdots,u \}$, indexing the components of the array of primal variables, $U \in \R^u$. This array contains variables whose partial derivatives, w.r.t $(t,x)$, of at most first order appear in the governing system of primal equations ---- thus, a conversion to a first-order system of the governing PDE is employed. Consider the system of equations
	\begin{subequations}
		\label{eq:gov_cont_mech}
		\begin{align}
			& \calC_{\Gamma I} \p_t U_I + \p_{x_j} (\calF_{\Gamma j}\ach{(U)}) + G_\Gamma\ach{(U)} + V_\Gamma(t,x)  = 0 \ \mbox{in} \ \Omega \times (0,T),  \qquad \Gamma = 1, \ldots, d, \label{eq:gov}\\
			& \calC_{\Gamma I} U_I (x, 0)  = \calC_{\Gamma I} U_I^{(0)} (x) \ \mbox{specified on } \Omega \ \mbox{(initial conditions)},\\
			& (\calF_{\Gamma j}\ach{(U)}n_j)(t,x)  = (B_{\Gamma j} n_j)(t,x)  \ \mbox{specified on } \p\Omega_\Gamma \ \mbox{(boundary conditions)} \label{eq:gov_bc},
		\end{align}
	\end{subequations}
	Note that, \dv{throughout Section} \ref{sec:dual_grad_flow_setting},  $\Omega$ denotes a given bounded domain in $\R^m$  with sufficiently regular boundary $\p \Omega \supset  \bigcup_\Gamma \p \Omega_\Gamma$ and unit normal vector $n$, and the upper-case Greek indices index the number of equations. Here, $\calC \in \R^{d \times m}$ is a fixed matrix, and $\calF, G, V, B$ and $U^{(0)}$ are, respectively, $\R^{d \times m}$-, $d$-,$d$-, $\R^{d \times m}$- and $u$-dimensional given functions of their argument.

	Define the pre-dual functional by forming the scalar products of \eqref{eq:gov} with the dual fields $D$, $(t,x) \mapsto D(t,x) \in \R^d$ (see \eqref{eq:gov}), integrating by parts, substituting the prescribed initial and boundary conditions (ignoring, for now, space-time boundary contributions that are not specified) \textit{and subtracting an auxiliary potential $H$ as shown}:
	\begin{multline}
		\label{eq:predual}
		\widehat{S}_H[U,D] \\ = \int_\Omega \int_0^T \left( - \calC_{\Gamma I} U_I \p_t D_\Gamma - \calF_{\Gamma j}(U) \p_{x_j} D_\Gamma + \left( G_\Gamma(U) + V_\Gamma(t,x) \right)D_\Gamma - H(U,t,x) \right) \,dtdx  \\
		\qquad - \int_\Omega \calC_{\Gamma I} {U}_I^{(0)}(x) D_\Gamma(x, 0) \, dx + \sum_{\Gamma \ach{=1}}^{\ach{d}} \int_{\p \Omega_\Gamma} \int_0^T  B_{\Gamma j} \, D_\Gamma \, n_j \, dt\,\da,   
	\end{multline}
	where the arguments $(t,x)$ are suppressed except to display the  explicit dependence of $V, H$ and in the initial condition. \dv{Although we adopt the summation convention, we write the last term more explicitly because it involves more than two repeated indices $\Gamma$.}

	Define
	\begin{equation}
		\label{eq:defs}
		\begin{aligned}
			(t,x) \mapsto \dee(t,x) & := (\p_t D(t,x), \nabla D(t,x), D(t,x)) \in \R^l, \quad l := d + md + d,\\
			\ach{ \scl_H(U,\dee,t,x)} & := \ach{- \calC_{\Gamma I} U_I \p_t D_\Gamma - \calF_{\Gamma j}(U) \p_{x_j} D_\Gamma + G_\Gamma(U) D_\Gamma - H(U,t,x).}
		\end{aligned}
	\end{equation}
	The $(t,x)$-dependence of $H$, introduced in \cite[Sec.~5]{ach2} with the stated goal of aiding in the recovery of non-unique solutions of the primal problem, includes the deployment of \emph{base states}, cf. Section \ref{sbs}, in the design of the dual functional\footnote{As an example, in \cite[Sec.~5.4]{ka2} piecewise-in-time solutions of Burgers with small viscosity are used as base states to recover entropy solutions to inviscid Burgers, when written in H-J form.}. These are prescribed fields $(t,x) \mapsto \bar{U}(t,x)$. \dv{From now on, $H$ can depend on $(t,x)$ only through $\bar{U}$. Hence, in what follows, we slightly abuse the notation and write $H(U,\bar U)$, $\scl_H(U,\dee,\bar U)$, etc.} We now require that the choice of $H$, given $t,x$ and a base state $\bar{U}$, be such that there exists a non-empty, open, convex subset   $\sco=\sco(t,x)\subset \R^l$ and a function $U^{(H)}: \sco\times \mathbb R^{u}\to \mathbb R^{u}$ satisfying
	\begin{subequations}\label{eq:dtp_rel}
		\begin{align}
			& \p_U \scl_H \Big( U^{(H)}(\dee, \bar{U}), \dee,\dv{\bar U} \Big)  = 0 \quad \forall \quad \dee \in \sco,  \label{eq:dtp} \\
			%%& \mbox{for some } \bar{U} \in \R^u   \notag\\
			& \mbox{and} \notag\\
			&  \p_U \scl_H \Big( U^*, 0,\dv{\bar U}  \Big)  = 0 \mbox{ admits the unique solution } U^* = \bar{U}(t,x). \label{eq:U*_dee0}
		\end{align}   
	\end{subequations}
	We refer to such a function $U^{(H)}$ as a \emph{Dual-to-Primal} (DtP) mapping. Assume that a \textit{dual-to-primal} (DtP) ``change of variables'' mapping $U^{(H)}$ exists. Given $D$ that \begin{enumerate}
		\item satisfies $\dee(t,x) \subset \sco(t,x)$, 
		\item has a well-defined trace on the space-time boundary,
		\item satisfies the boundary constraints on the parts of the space-time  boundary \emph{complementary} to those that appear explicitly in \eqref{eq:gov_cont_mech}: \be \label{e:cbc} D_\Gamma=D^*_\Gamma\quad \mathrm{for}\quad t=T\quad\mathrm{or}\quad x\not\in \p \Omega_\Gamma,   \qquad \Gamma = 1, \ldots, d,\ee where $D^*: [0,T] \times \overline\Omega\to \R^d$ is a fixed, sufficiently regular, \emph{arbitrarily chosen} function satisfying $\dee^*(t,x) \subset \sco$, 
	\end{enumerate} we define the \textit{dual} functional as
	\begin{multline}\label{eq:dual}
		S_H[D; \bar{U}] := \widehat{S}_H \left[ U^{(H)}(\dee,\bar{U}), D \right] 
		\ = \int_\Omega \int_0^T  \scl_H\left(U^{(H)}(\dee,\bar{U}), \dee,\bar{U} \right) \, dtdx\\ \dv{+\int_\Omega \int_0^T  V_\Gamma(t,x) D_\Gamma \, dtdx} - \int_\Omega \calC_{\Gamma I} U_I^{(0)}(x) D_\Gamma(x, 0) \, dx \\
		\qquad + \sum_{\Gamma \ach{= 1}}^{\ach{d}} \int_{\p \Omega_\Gamma} \int_0^T  B_{\Gamma j} \, D_\Gamma \, n_j \, dt\, \da.
	\end{multline} 
	
	Using the facts
	\begin{enumerate}
		\item \eqref{eq:dtp},
		\item $\scl_H$ in \eqref{eq:defs} is necessarily affine in its argument \ach{$\dee$ by construction (related to the idea of ``Lagrange multipliers'' applied to ``constraints'' in forming the pre-dual functional $\widehat{S}_H$)}, 
		\item the variations $\delta D$ vanish on the parts of the boundary where Dirichlet boundary conditions on the dual fields are specified, cf. \eqref{e:cbc},
	\end{enumerate} 
	it is easily verified that, formally, the Euler-Lagrange equations and side conditions of $S_H$ \eqref{eq:dual}  are given by the system \eqref{eq:gov_cont_mech}, using the substitution $U \to U^{(H)}(\dee,\bar{U})$, for \textit{any} $H$ that allows for the construction of $U^{(H)}$.
	
	In the following, we make the choice $D^* = 0$ for simplicity.
	\begin{remark}
		The flexibility afforded by the arbitrary choice of $D^*$ can be practically beneficial, especially when solving a problem with a fixed base state (when using a changing base state scheme, the choice $D^* = 0$ is most natural as discussed below). Computational experience with even linear problems like the heat and the linear transport equation in \cite{ka1,sa} with $\bar{U} = 0$ and $D^* = 0$ shows the development of strong boundary layers and/or large gradients of dual approximations at domain corners. These arise from the demands on the dual solution placed by meeting the arbitrarily set dual Dirichlet boundary condition and the needs of satisfying the primal problem. An obvious solution for such a situation is to note the values of the dual fields in the interior of the domain corresponding to an approximate solution obtained with  a first prescription of say $D^* = 0$, smoothly extrapolate, with small gradients, that `interior' field to the boundaries, and then apply the values obtained on the `Dirichlet boundaries' as the modified $D^*$ boundary condition to obtain the exact solution or better approximation.
	\end{remark}
	Before proceeding further, we first show that choices of $H(U,\bar U)$ satisfying \eqref{eq:dtp_rel} are easily made\footnote{From now on, we employ an abuse of notation and write $H(U,\bar U)$, $\scl_H(U, \dee , \bar{U})$ instead of $H(U,t,x)$, $\scl_H(U, \dee, t,x)$, etc.}; for instance, suppose 
	\begin{equation}\label{eq:convexity_H}
		H \mbox{ is a strictly convex function of } U \mbox{ attaining its unique minimum at } U = \bar{U},
	\end{equation}
	e.g.,
	\begin{equation*}
		H (U, \bar{U}) = a \, \half |U - \bar{U}|^2 + b \, \frac{1}{p} |U - \bar{U}|^p, \qquad a, b, p \in \R; \quad a > 0; \quad b \geq 0; \quad p > 2.
	\end{equation*}
	Then, since $\scl_H$ is necessarily affine in $\dee$,
	\[
	\scl_H(U, 0, \bar{U}) = - H(U, \bar{U})
	\]
	and
	\[
	\p_U \scl_H(U^*,0, \bar{U}) = -\p_U H(U^*,\bar{U}) = 0
	\]
	admits the unique solution $U^* = \bar{U}$ and
	\begin{equation}\label{eq:L_+def}
		- \, \p_{UU} \scl_H (\bar{U}, 0, \bar{U}) >0
	\end{equation}
	in the sense of symmetric, positive-definite matrices. 
	
	By the implicit function theorem\footnote{Remember that here we work at a formal level and ignore regularity issues.}, there exists a (w.l.o.g. convex) neighborhood $\sco=\sco(t,x) \subset \R^l$ containing $\dee = 0$ and a mapping $U^{(H)}(\cdot \, , \bar{U}) : \sco \to \R^u$ satisfying \eqref{eq:dtp}, for each fixed choice of $t,x$ and $\bar{U} \in \R^u$.
	
	We also make the following observation, cf. \cite{CMP18}. Let us define
	\begin{equation*}
		\begin{aligned}
			& \widetilde{S}_H [D; \bar{U}]  \ \ach{:=} \ \sup_U \widehat{S}_H[U,D] \\
			& = \sup_U \int_\Omega \int_0^T \scl_H\left(U(t,x), \dee(t,x), \bar{U}(t,x) \right) \, dtdx \\
			& \qquad \ach{%%+ \int_\Omega \int_0^T V_\Gamma(t,x) D_\Gamma \, dt dx 
				+ \ (\mbox{space-time bulk and  boundary terms from \eqref{eq:dual}})}\\
			& = \int_\Omega \int_0^T \sup_U \scl_H\left(U, \dee(t,x), \bar{U}(t,x) \right) \, dtdx \\
			& \qquad \ach{%%+ \int_\Omega \int_0^T V_\Gamma(t,x) D_\Gamma \, dt dx 
				+ \ (\mbox{space-time bulk and boundary terms from \eqref{eq:dual}})}\\
		\end{aligned} 
	\end{equation*}
	(noting that $\scl_H$ depends on the field $U$ only through its pointwise values). Since $\widehat{S}$ is affine\footnote{Rigorously speaking, $\widehat{S}_H[U,\cdot]$ has a certain convex domain in $\mathcal H$ where $\dee$ and the boundary traces are well-defined, and the Dirichlet boundary condition \eqref{e:cbc} holds; outside this domain, we simply set $\widehat{S}_H[U,D]:=+\infty$; this is enough to guarantee the convexity of $\widetilde{S}_H$.} in $D$, the functional $\widetilde{S}_H[ \ \cdot \ ; \bar{U}]$ is  \emph{convex} in the Hilbert space $$\mathcal H:=L^2(\Omega\times(0,T);\R^d).$$  Since the set of positive-definite matrices is open in $\R^{u \times u}$, using \eqref{eq:L_+def} we infer that for a.e. $(t,x) \in [0,T] \times \Omega$ there exists a $\R^l \times \R^u$-neighborhood centered in $(0,\bar{U}(t,x))$ in which $\scl_H\big(\cdot, \dee(t,x), \bar{U}(t,x)\big)$ is strictly concave in $U$ w.r.t. its first argument. Consequently, for a given base state $\bar{U}: [0,T] \times \Omega \to \R^u$, there exists a convex set $\sco^*_{\bar{U}} \subset \mathcal H$ containing zero such that for every $D \in \sco^*_{\bar{U}}$ one has
	\begin{equation}\label{eq:Dtp_supremizer}
		\begin{aligned}
			\sup_U \scl_H(U,\dee(t,x), \bar{U}(t,x)) & = \scl_H\Big(U^{(H)}(\dee(t,x), \bar{U}(t,x)), \dee(t,x), \bar{U}(t,x) \Big), \\
			& \qquad \qquad  \mathrm{for}\ \mathrm{a.e.} \quad (t,x) \in [0,T] \times \Omega 
		\end{aligned}
	\end{equation}
	and
	\begin{equation}\label{eq:SH=tildeSH}
		S_H[D;\bar{U}] = \widetilde{S}_H [D; \bar{U}].
	\end{equation}
	
	In other words, for each fixed base state $\bar{U}: [0,T] \times \Omega \to \R^u$ there exists a convex set $\sco^*_{\bar{U}}\subset \mathcal H$ in which the dual functional $S_H[D; \bar{U}]$ is convex and its Euler-Lagrange equations and side conditions are given by \eqref{eq:gov_cont_mech}, with the substitution $ U(t,x) \to U^{(H)}(\dee(t,x), \bar{U}(t,x))$. The DtP map $(\dee,\bar{U}) \mapsto U^{(H)}(\dee,\bar{U})$ is well-defined for $D\in \sco^*_{\bar{U}}$, with the maximizer in \eqref{eq:SH=tildeSH} being an isolated critical point of $\scl_H(\,\cdot \,, \dee(t,x); \bar{U}(t,x))$. 
	
	It is this insight that we use next to propose a convex gradient flow technique to attempt to find weak solutions to \eqref{eq:gov_cont_mech}, utilizing minimizers of \eqref{eq:SH=tildeSH} for a suitable $H$ \textbf{designed by the scheme}.
	\begin{remark} In \cite{sga} the minimizers of  $\tilde{S}_H$ were called \textit{variational dual solutions} and the ones among these that allow the recovery of a weak solution of the original PDE via the DtP map were defined as \textit{dual solutions}. A meaningful choice of the auxiliary potential $H$ is essential for the idea of variational dual solutions  (as defined in \cite{sga}) to be non-vacuous, and they can differ from dual solutions, as happens, in particular, in the presence of a duality gap. \end{remark}
	Before proposing the gradient flow scheme for our problem, we make a few further observations:
	\begin{enumerate}    
		\item The ``$L^2$-variation'' $\frac{\delta S_H}{\delta D}\big[D;\bar{U}\big]$ is \textbf{formally} (by the envelope theorem) defined by \begin{equation}
			\label{eq:predual2}
			\begin{aligned}
				& \int_\Omega \int_0^T \frac{\delta S_H}{\delta D}\big[D(t,x); \bar{U}(t,x) \big] \, \delta D(t,x)\, dtdx \\  &  = \int_\Omega \int_0^T \left( - \calC_{\Gamma I} U_I \p_t \delta D_\Gamma - \calF_{\Gamma j}(U) \p_{x_j} \delta D_\Gamma + \left( G_\Gamma(U) + V_\Gamma(t,x) \right)\delta D_\Gamma \right) \,dtdx  \\
				& \qquad - \int_\Omega \calC_{\Gamma I} {U}_I^{(0)}(x) \delta D_\Gamma(x, 0) \, dx + \sum_{\Gamma \ach{=1}}^{\ach{d}} \int_{\p \Omega_\Gamma} \int_0^T  B_{\Gamma j} \, \delta D_\Gamma \, n_j \, dt\,\da,
			\end{aligned}    
		\end{equation} with the substitution $U(t,x) = U^{(H)}(\dee(t,x), \bar{U}(t,x))$.

		\item We note that 
		\begin{equation} \frac{\delta S_H}{\delta D}\big[D;\bar{U}\big]\quad \textrm{depends on (the function)}\ D\ \textrm{only through}\ U^{(H)}(\dee, \bar{U}). \label{eq:var_deriv_dep_Uh} \end{equation}
		
		\item Solving system \eqref{eq:gov_cont_mech} in a weak sense means solving
		\begin{equation*}
			\int_\Omega \int_0^T \frac{\delta S_H}{\delta D}\big[D(t,x); \bar{U}(t,x) \big] \, \delta D(t,x)\, dtdx = 0
		\end{equation*}
		for $D$ satisfying the Dirichlet BC \eqref{e:cbc} and for all $\delta D$ satisfying homogeneous Dirichlet BC at the complementary boundary defined in \eqref{e:cbc}. 
		\item Suppose $\bar{U}$ is a weak solution to \eqref{eq:gov_cont_mech}. Then $D = 0$ is a solution to the dual problem with $D^*=0$, and defines, through the DtP map, a global in time solution to the primal problem. Thus, the intuition is that if $\bar{U}$ is ``close'' to a primal solution then, given the convexity properties of the dual problem, it is reasonable to expect to obtain a primal solution (through the DtP map), to which the base state is close, by solving the dual problem. 

	\end{enumerate}
	\subsection{The scheme: Gradient flow evolutions in stages.}\label{sec:grad_flow_iter}
	\begin{enumerate}
		\item We think of a (fake, time-like) variable $\s$ along which a gradient descent in the Hilbert space $\mathcal H$ of the functional $S_{H_k}$ is executed in stages indexed by $k$. Here, $H_k$ is the auxiliary potential used in the stage $k$. A typical choice is \begin{equation*}
			H_k(U) = a \, \half |U - \bar{U}^k|^2 + b \, \frac{1}{p} |U - \bar{U}^k|^p, \qquad a, b, p \in \R; \quad a > 0; \quad b \geq 0; \quad p > 2, 
		\end{equation*} where the sequence of the base states $\bar{U}^k(t,x)$ will be described below. 
		\item Each stage comprises the interval $[0,\tilde{\s}_k^*)$, where $\tilde{\s}_k^*$ could possibly be $+\infty$, but will be otherwise determined by the scheme as follows.
		\item We consider dual unknown functions $D^k:[0,T] \times \Omega \times {[0,\tilde{\s}_k^*)} \to \R^d, (t,x,\s) \mapsto D^k(t,x,\s)$, and corresponding variations (test functions) $\delta D$ to execute the following gradient flow:
		%\begin{subequations}\label{eq:grad_flow}
		%\allowdisplaybreaks
		%\begin{align}
		%& \mbox{ for all } \s \in [0,\tilde{\s}_k^*] \mbox{ and with notation } \hat{U}|_{(t,x,\s)} := U^{(H_k)}\Big(\dee^k|_{(t,x,\s)}, \bar{U}^k|_{(t,x,\s)}\Big),\notag\\
		%& \mbox{ find a } D^k   \mbox{ satisfying the dual b.c.s by solving, \emph{ for all }} \delta D \mbox{ satisfying their respective }\notag \\
		%& \mbox{homogenous b.c.s}, \notag\\
		%& \int_{[0,T] \times \Omega} \delta D(t,x,\s) \frac{\p D^k}{\p \s} (t,x,\s) \, dtdx = \notag\\
		%& \qquad \qquad \qquad - \int_{[0,T] \times \Omega} \frac{\delta S_{H_k}}{\delta D} [D^k; \bar{U}^k](t,x,\s) \delta D(t,x,\s) \, dtdx.  \label{e:gfam}
		%\end{align}
		%\end{subequations}
		%, and with the notation
		%\[
		%\hat{U}\big|_{(t,x,\s)}
		%:= U^{(H_k)}\!\left(
		%   \dee^k\big|_{(t,x,\s)},
		%   \bar{U}^k\big|_{(t,x,\s)}
		%\right),
		%\]
		
		for all $\s \in [0,\tilde{\s}_k^*]$
		find $D^k$ satisfying the dual boundary conditions by solving,
		for all $\delta D$ satisfying their respective homogeneous boundary conditions,
		\begin{multline}\label{eq:grad_flow}
			\int_{[0,T] \times \Omega}
			\delta D(t,x)\,
			\frac{\partial D^k}{\partial \s}(t,x,\s)\, dt\,dx
			\\=
			- \int_{[0,T] \times \Omega}
			\frac{\delta S_{H_k}}{\delta D}[D^k;\bar{U}^k](t,x,\s)\,
			\delta D(t,x)\, dt\,dx .
			%\label{e:gfam}
		\end{multline}
		\item Remember that the dissipation of the convex driving energies is non-increasing along such gradient flows (see, e.g., \cite{Atbu14}): 
		\begin{equation}\label{eq:norm_grad_flow} \frac d{d\s}\int_{[0,T] \times \Omega}
			\Big|\frac{\delta S_{H_k}}{\delta D}\Big|^2[D^k;\bar{U}^k](t,x,\s)\,
			\, dt\,dx \le 0.\end{equation}
		\item We also denote 
		\[
		U^{H_k}(t,x,\s) := U^{(H_k)}\big( \dee^k(t,x,\s), \bar{U}^k(t,x) \big)
		\] provided the right-hand side is well-defined.

		\item For $k = 1$, we select the base state $\bar{U}^1(t,x)$ arbitrarily, noting that the closer such a base state is to a solution, the better the chances of the scheme to converge to a solution. 
		%For simplicity, we can take $U^g:[0,T] \times \Omega \to \R^u$ that is independent of $\s$ and let
		%\[
		%\bar{U}^1(t,x,\s) = U^g(t,x) \quad \mbox{for all } \s \in [0,\tilde{\s}_1^*].
		%\]}
	%%
	\item We now run the first gradient descent \eqref{eq:grad_flow} emanating from $D^1(t,x,0) := 0$. Here and below, recall that the dual boundary data is defined by $D^* = 0$.
	
	\item Then, either the gradient flow equilibrates, possibly at $\tilde{\s}^*_1 =+ \infty$, without violating 
	\begin{equation}\label{eq:base_state_change_crit} \begin{aligned}
			& \dee^1(t,x,\s) \in \sco(t,x),\\
			- \p_{UU} \scl_H&\Big(U^{(H)}(\dee^1(t,x,\s), \bar{U}^1(t,x,\s)), \dee^1(t,x,\s), \bar{U}^1(t,x) \Big) > 0, \end{aligned} \end{equation}
	for a.e.  $(t,x) \in [0,T] \times \Omega$ and for all  $\s \in [0,\tilde{\s}^*_1]$, 
	so that, as a consequence,
	\begin{multline*}\sup_U \scl_H(U,\dee^1(t,x,\s), \bar{U}^1(t,x)) 
		\\ = \scl_H\Big(U^{(H)}(\dee^1(t,x,\s), \bar{U}^1(t,x,\s)), \dee^1(t,x,\s), \bar{U}^1(t,x) \Big),\end{multline*}
	or the flow reaches a fake time $\tilde{\s}^*_1$ when \eqref{eq:base_state_change_crit} is violated on a set of positive Lebesgue measure in $[0,T]\times \Omega$.
	
	\item If the first alternative holds, then we can apply the DtP map to $D^1(t,x,\tilde{\s}^*_1)$ and generate a solution to \eqref{eq:gov_cont_mech} by \be U(t,x):=U^{H_1}(t,x,\tilde{\s}^*_1)=
	U^{(H_1)}\!\left(
	\dee^1(t,x,\tilde{\s}^*_1),
	\bar{U}^1(t,x)
	\right).\ee \begin{remark} In cases where the solutions to \eqref{eq:gov_cont_mech} are not unique, we can find only one of them. However, by varying the initial base state $\bar{U}^1$, it is expected to be possible to capture additional solutions as well. In principle, any solution can be retrieved, using observation (3) at the end of Sec.~\ref{sec:var_set}. In practice, see the approximation of a) a wide variety of traveling waves of a dispersive semi-discrete Burgers equation in \cite{kpa} (with an algorithm which does not exploit the feature \eqref{eq:norm_grad_flow}), and b) even unstable solutions of the primal problem in a stable manner in \cite[Sec.~6.3.4]{sga} that deals with the nonconvex elastodynamics of a bar (utilizing only one base state and stage).\end{remark}

	\item If the second alternative holds and $\tilde{\s}^*_1$ is finite, we define $\s_1^* := \tilde{\s}^*_1 - \nu$, for some user-defined tolerance $\nu > 0$. Moreover, if $\tilde{\s}^*_1 =+ \infty$ --- which means that the limiting $D(\tilde{\s}^*_1)$ lies outside of the ``DtP zone'' \eqref{eq:base_state_change_crit} although the trajectory $D(\s)$ stays inside the ``DtP zone'' --- we define $\s_1^* := \mu$, for some user-defined big number $\mu > 0$. 
	\item  Then we set
	\begin{subequations}\label{eq:base_state_change}
		\allowdisplaybreaks
		\begin{align}
			& \bar{U}^2(t,x) :=U^{H_1}(t,x,\s_1^*)= U^{(H_1)}\big( \dee^1(t,x,\s_1^*), \bar{U}^1(t,x) \big) \qquad \mbox{ for all } \s \in [0, \tilde{\s}^*_2], \tag{\ref{eq:base_state_change}}\\
			& D^2(t,x,0) := 0. \notag
		\end{align}
	\end{subequations}
	
	\item We now execute the second gradient flow to find $D^2$. By \eqref{eq:U*_dee0} and \eqref{eq:base_state_change} we always have $$U^{H_2}(t,x,0) = \bar{U}^2(t,x)=U^{H_1}(t,x,\s_1^*),$$ i.e., the new $U$-trajectory is a continuation of the old one. 
	
	\item We continue the iterations until equilibration is achieved at some stage $k_*\in \mathbb N$, allowing us to generate a solution $$U(t,x):=U^{H_{k_*}}(t,x,\tilde{\s}^*_{k_*}).$$
	\item Since
	\[
	U^{H_{k+1}}(t,x,0) = \bar{U}^{k+1}(t,x) = U^{H_k}(t,x,\s^*_k),
	\]
	it follows from \eqref{eq:var_deriv_dep_Uh} and \eqref{eq:norm_grad_flow} that the $\mathcal H$-norm of the gradients $\| \frac{\delta S_{H_k}}{\delta D} \|_{\mathcal H}$, i.e., the dissipation of the energies does not increase with the course of fake-time, \textbf{including the moments of switching between the stages} (with the goal of eventually approaching zero dissipation via the gradient flow).
	\item It remains to prove that one has equilibration after a finite number of steps, i.e., $k_*\in \mathbb N$. For analytical purposes $k_* \to +\infty$ is acceptable, and for computational purposes reaching the threshold $\| \frac{\delta S_{H_k}}{\delta D} \|_{\mathcal H} \leq  \tau$, for some user-defined tolerance ($0 < \tau \ll 1$), in a finite number of stages is sufficient.
	
	\end{enumerate}  
	
	\begin{remark} It is natural to expect that a starting base state in time-dependent problems can be more effective if the real-time interval $[0,T]$ is divided into smaller disjoint sub-intervals whose closed union covers $[0,T]$, with the dual problem solved by the above gradient flow (in fake time) in each such sub-interval, defined with the real-time initial condition generated by the solution of the previous sub-interval. Thus, in this strategy, base states are switched both in real-time (to provide constant-in-real-time starting base state guesses for each sub-interval, at a minimum), as well as potentially in fake-time for the gradient flow within each sub-interval of real time. In experience with nonlinear transient problems computed without the gradient flow strategy \cite[Sec.~5.3]{ka1}-\cite{ka2}, such switching of base states across sub-intervals in real-time was found to be essential for success (as well as computational efficiency). On the other hand, \cite{kpa} involves a time-independent problem where switching base states between stages of Newton method based iterations (the analog of the gradient flow stages) was found to be essential.
	\end{remark}
	
	\section{A toy example of the gradient flow performance} \label{s:toy}
	
	In this section, we illustrate (using a very simple model) how the switching of the base states, the convergence of the scheme, and the eventual generation of a solution occur in practice.
	
	Consider the following \emph{algebraic} system of equations (so that there is no dependence on $t$ and $x$ and no integration) for the unknown vector $U=(U_1,U_2)\in \mathbb R^2$:  
	\be \label{e:alg} \mathfrak Q_c(U)=0, \ee where \be \mathfrak Q_c(U)_i:=(U_1-c)(U_2-c)-(U_i-c), \ i=1,2.
	\ee Here $c$ is a scalar parameter (in what follows, we tacitly assume $c\neq - \frac 1 2$; the special case $c= - \frac 1 2$ is addressed in Remark \ref{r:c12}). It is obvious that \eqref{e:alg} has two solutions: \[(U_1,U_2)=(c,c), \quad (U_1,U_2)=(c+1,c+1).\]
	Let us see how our gradient flow scheme finds one of them.
	
	For any given base state $\bar{U}\in \mathbb R^2$, define the auxiliary potential \be \label{auxp}H(U)=H(U,\bar U):=\frac 1 2 |U-\bar{U}|^2.\ee
	Mimicking \eqref{eq:predual}, define the pre-dual functional by forming the scalar product of \eqref{e:alg} with the dual variable $D=(D_1,D_2)\in \mathbb R^2$:
	\begin{equation}
	\label{eq:predualalg}
	\scl_H(U,D,\bar U):=\widehat{S}_H[U,D] := (U_1-c)(U_2-c)(D_1+D_2)-(U_1-c)D_1-(U_2-c)D_2-H(U).
	\end{equation}
	Taking the variation w.r.t. $U$, we observe that the DtP map $U^{(H)}$ is defined (provided $|D_1+D_2|\neq 1$) by the formula $U^{(H)}(D)=u$, where $u=(u_1,u_2)$ is the unique solution of the linear system \be \label{e:linsys}(u_2-c)(D_1+D_2)-D_1-(u_1-\bar{U}_1)=0, \quad (u_1-c)(D_1+D_2)-D_2-(u_2-\bar{U}_2)=0.\ee
	Moreover,
	\be \label{e:hesl}
	- \, \p_{UU} \scl_H (U, D, \bar{U}) =\left(\begin{array}{@{}c|c@{}}
	1& -D_1-D_2 \\ \hline -D_1-D_2 &
	1 \end{array}\right).
	\ee The ``DtP zone'' where the evolution of the gradient flow scheme should occur is determined by the following condition, cf. \eqref{eq:base_state_change_crit}, \be \label{eq:base_state_change_crit_alg}
	- \p_{UU} \scl_H (U^{(H)}(D), D, \bar{U}) > 0, 
	\ee
	which in view of \eqref{e:hesl} is simply equivalent to \be \label{e:good} |D_1+D_2|< 1.\ee
	For such $D$, we define \be S_H[D] := \widehat{S}_H \left[ U^{(H)}\dv{(D)}, D \right]. \ee This is actually a restriction  of the convex functional \[\widetilde{S}_H [D]  := \sup_U \widehat{S}_H[U,D], \quad D\in \mathbb R^2,\] to the ``DtP zone'' \eqref{e:good}. \dv{(Indeed,  for $D$ satisfying \eqref{e:good}, equality \eqref{e:hesl} implies that $\widehat{S}_H$ is strictly concave with respect to $U$. Hence, $\widehat{S}_H(\cdot,D)$ necessarily attains its maximum at its critical point $U^{(H)}(D)$.)} A tedious calculation involving \eqref{e:linsys} shows that \[ S_H[D]=\frac 1 2 \left(D-\bar U+\left(\begin{array}{@{}c@{}}
	c \\ \hline c \end{array}\right)\right)^\top\left(\begin{array}{@{}c|c@{}}
	1& -D_1-D_2 \\ \hline -D_1-D_2 &
	1 \end{array}\right)^{-1} \left(D-\bar U+\left(\begin{array}{@{}c@{}}
	c \\ \hline c \end{array}\right)\right)-\frac{|c-\bar{U}|^2} 2,  \]
	whence $S_H$ is strictly convex within its domain of definition. Moreover, it follows from the envelope theorem that \be \label{envelo} \nabla S_H(D)=\mathfrak Q_c(U^{(H)}(D)).\ee

	For definiteness (and somewhat echoing \cite{CMP18}), let our gradient flow scheme start from the zero base state \[
	\bar{U}^1 =(0,0) \quad \mbox{for all } \s \in [0,\tilde{\s}_1^*],\] 
	where $\tilde{\s}_1^*$ is the duration of the first stage (to be specified below). We run the gradient flow 
	\be \frac d {d\s} D^1(\s)=-\nabla S_{H_1}[D^1(\s)], \quad D^1(0)=0 \label{e:ourgf}\ee ---  in our toy model the ambient Hilbert space is two-dimensional and the trajectory\footnote{Remember that the upper index indicates the number of the stage of the scheme and the lower indices denote the components of the trajectory.} 
	$(D^1_1,D^1_2)$ of the gradient flow  is determined by a pair of ODEs  --- starting from $(D^1_1,D^1_2)(0)=(0,0)$  until it either reaches the boundary of the ``DtP zone'' in (fake) time $\tilde{\s}_1^*$ or  equilibrates, possibly at $\tilde{\s}_1^*=+\infty$. Since $\bar{U}_1^1=\bar{U}_2^1$, by symmetry and strict convexity of $S_{H_1}$ the straight line \be D_1=D_2 \label{e:ans} \ee is invariant for \eqref{e:ourgf}, and, since the initial condition belongs to it, the whole trajectory should also belong to it. 
	
	It follows from \eqref{e:linsys} that the DtP map restricted to the straight line \eqref{e:ans} acts as \be \left(U^{(H_1)}(d,d)\right)_i=\frac {c+d}{2d-1}+c, \ i=1,2, \quad |2d|\neq 1. \label{e:rdtp} \ee Leveraging \eqref{envelo}, we observe that the restricted gradient flow \eqref{e:ourgf} reduces to \be \label{e:oded1} d'(\s)=\frac {c+d(\s)}{2d(\s)-1}- \frac {(c+d(\s))^2}{(2d(\s)-1)^2},  \quad d(0)=0,\ee and the trajectory should stay in the ``DtP zone'' \[|2d(\s)|< 1.\] Letting \[\tilde d:=2d-1, \quad \tilde c :=|2c+1|>0,\] we rewrite this ODE in the form \be \label{e:oded} \tilde d'=\frac {\tilde d^2 -\tilde c^2}{2 \tilde d^2},  \quad \tilde d(0)=-1\ee together with the constraint \be \label{e:constode} \quad -2<\tilde d < 0.\ee
	Integrating \eqref{e:oded}, we obtain  \[2\tilde d + \tilde c \ln \left |\frac {\tilde c -\tilde d}{\tilde c +\tilde d}  \right| =\s-2+\tilde c \ln \left |\frac {\tilde c +1}{\tilde c -1}  \right|.\]
	
	If $0<\tilde c <2 $, $\tilde c \neq 1$,  the solution $\tilde d$ satisfies \eqref{e:constode}. Hence,  \[ \tilde c \ln \left |\frac {\tilde c -\tilde d}{\tilde c +\tilde d}  \right| =\s-2+\tilde c \ln \left |\frac {\tilde c +1}{\tilde c -1}  \right|-2\tilde d \to +\infty\] as $\s\to +\infty$, which implies that $\tilde d(\s)$ converges to $\tilde d_\infty:=-\tilde c$ with an explicit exponential rate. The corresponding attractor of \eqref{e:oded1} is \[d_\infty:=\frac 1 2 -\left| c+\frac 1 2\right|.\] The DtP map \eqref{e:rdtp} applied to this limit generates the solution $(c,c)$ if $c<-\frac 1 2$ and $(c+1,c+1)$ if $c>-\frac 1 2$ (i. e., it selects the one that is closer to the base state).  
	
	If $\tilde c=1$ (i.e., $c=0$ or $c=-1$), then the flow is constant: $\tilde d(\s)= -1$. Accordingly, $\tilde{\s}_1^*=0$ and $d (\tilde{\s}_1^*)=0$, and the DtP map generates the solution $(0,0)$. 
	
	If $\tilde c \geq 2$, the trajectory reaches the value $\tilde d=-2$ that corresponds to $d=-\frac 12 $ and lies on the boundary of the ``DtP zone'' at time \be \label{e:sttime} \tilde{\s}^*_1:= -2 + \tilde c \ln \left |\frac {\tilde c +2}{\tilde c -2}  \right| - \tilde c \ln \left |\frac {\tilde c +1}{\tilde c -1}  \right|\ee (with the convention $\tilde{\s}^*_1=+\infty$ for $\tilde c = 2$). After stepping back to the user-defined moment $\s_1^*$ the value of $d(\s_1^*)$ is still approximately equal to $-\frac 1 2$. We change the base state in accordance with \eqref{eq:base_state_change}, and set \be \bar{U}^2 =(v,v):=\left (\frac {c+d(\s_1^*)}{2d(\s_1^*)-1}+c,\frac {c+d(\s_1^*)}{2d(\s_1^*)-1}+c \right)\approx \left (\frac {2c+1}{4},\frac {2c+1}{4} \right).\label{e:vv}\ee 
	
	We run the second stage of our scheme and observe that the trajectories of the second gradient flow \be \frac d {d\s} D^2(\s)=-\nabla S_{H_2}[D^2(\s)], \quad D^2(0)=0, \label{e:ourgf2}\ee are still on the straight line \eqref{e:ans}, for the same reason as before. Hence, the restricted DtP map acts as  \be \left(U^{(H_2)}(d,d)\right)_i=\frac {c+d-v}{2d-1}+c, \ i=1,2, \quad |2d|\neq 1, \label{e:rdtp2} \ee with $v$ defined by \eqref{e:vv}. Consequently, the new gradient flow is
	\be \label{e:oded2} d'(\s)=\frac {c-v+d(\s)}{2d(\s)-1}- \frac {(c-v+d(\s))^2}{(2d(\s)-1)^2},  \quad d(0)=0,\quad |2d(\s)|< 1.\ee Letting \[\tilde d:=2d-1, \quad \tilde c :=|2(c-v)+1|,\] we obtain the ODE \eqref{e:oded}. We will later show that $\tilde c>0$. 
	
	By the same reasoning as above, if \be 0<|2(c-v)+1|<2,\label{e:eqz} \ee including the trivial case $|2(c-v)+1|=1$, $d(\s)$ converges to $d_\infty:=\frac 1 2 -\left| c-v+\frac 1 2\right|$ with an explicit exponential rate (or immediately). The DtP map \eqref{e:rdtp2} generates the solution $(c,c)$ if $c-v<-\frac 1 2$ and $(c+1,c+1)$ if $c-v>-\frac 1 2$.
	
	If $|2(c-v)+1| \ge 2$, the trajectory reaches $d=-\frac 12 $ at the boundary of the ``DtP zone'' at explicit time $\tilde{\s}^*_2$ determined by \eqref{e:sttime} with the updated value of $\tilde c$. 
	
	We repeat the procedure in the same manner for several additional steps until we reach the equilibration zone \eqref{e:eqz} and obtain a solution by the DtP map. In what follows, we show that we only need a \textbf{finite number of steps} and that, at every step, the corresponding constants $\tilde c$ are strictly positive. 
	
	We first observe that the base states are defined by the reciprocal relation
	\begin{multline}  \bar{U}^{k+1} =(v_{k+1},v_{k+1}):=\left (\frac {c-v_k+d(\s_k^*)}{2d(\s_k^*)-1}+c,\frac {c-v_k+d(\s_k^*)}{2d(\s_k^*)-1}+c \right)\\ \approx \left (\frac {2(c+v_k)+1}{4},\frac {2(c+v_k)+1}{4} \right), \quad v_1=0.\label{e:vv1}\end{multline} Thus, \[v_k\approx \left(c+\frac 1 2\right)(1-2^{1-k}), \] which obviously satisfies the second inequality in \eqref{e:eqz} for $k$ large enough. It remains to prove that $|2(c-v_k)+1|>0$, i.e., \[c+\frac 1 2 -v_k\neq 0.\]
	
	Assume for definiteness that $c>-\frac 1 2$ (the opposite case is handled in a similar way). Let us prove by induction that \be c+\frac 1 2 -v_k \geq 2^{1-k}\left(c+\frac 1 2\right)>0.\ee The case $k=1$ is trivial, so we can assume that \[ c+\frac 1 2 -v_{k-1} \geq 2^{2-k}\left(c+\frac 1 2\right).\] By construction, $d(\s_{k-1}^*)\in \left(-\frac 1 2 , 0\right)$, so \[v_{k}=\frac {c+\frac 1 2-v_{k-1}+d(\s_{k-1}^*)-\frac 1 2}{2d(\s_{k-1}^*)-1}+c<\frac {2(c+v_{k-1})+1}{4},\] whence \[c+\frac 1 2 -v_k>\frac 1 2 \left(c+\frac 1 2 -v_{k-1} \right)\geq 2^{1-k}\left(c+\frac 1 2\right).\]
	
	\begin{remark} \label{r:c12} We deliberately excluded the case $c=-\frac 1 2$  because, when started from the zero base state --- as we did --- the scheme enters the classical ``Buridan’s donkey'' dilemma and does not evolve. Indeed, the corresponding DtP map \eqref{e:rdtp} is identically zero, and the gradient flow \eqref{e:oded1} becomes \be d'(\s)=\frac 1 4,  \quad d(0)=0.\ee At time $\tilde{\s}^*_1=2$ the trajectory reaches $d=\frac 12 $ at the boundary of the ``DtP zone'', but at any $\s^*_1 < 2$ the base state produced by the DtP map is $\bar{U}^2 =(0,0)$, cf. \eqref{e:vv}, i.e., the new base state coincides with the old one.    However, a minor perturbation of the zero base state creates a bias and resolves the issue, allowing the scheme to converge to one of the solutions, as it did above\footnote{The same issue was discussed in \cite[Sec.~2.2.2]{udk_thesis} from a different perspective (not involving a gradient flow) for the dual problem in the simple example of the intersection of a circle with a vertical line.}. This indicates that, in potential applications of the scheme to PDEs, certain base states should be avoided (e.g., to prevent excessive symmetry). \end{remark}

	%To be continued.
	\section{A brief description of the gradient flow scheme for the noise-free Nash system}
	\label{sec5}
	The noise-free Nash system in its Burgers-like formulation \eqref{e:w1} formally reads \be \label{e:mainhjv} \partial_t v+\nabla \vv (v \otimes v)=0, \ v(0,x)=v_0(x):=\nabla \psi_*(x),\ee complemented with spatially-periodic boundary conditions. Thus, it fits into our general framework \eqref{eq:gov_cont_mech} (up to the boundary conditions that are now much easier to handle; note also that here we are denoting the unknown function for the primal problem by $v$ instead of $U$ and that $u=d=n=pN^2$). 
	
	We define the auxiliary potential $H$ as in \eqref{auxp}:\be \label{auxp2}H(v)=\frac 1 2 |v-\vbs|^2.\ee Then, denoting the dual variable by $a$, we observe that \begin{equation}
	\label{eq:predualn}
	\widehat{S}_H[v,a]=- \int_0^T \left[\ren(t)+(v-v_0,E)+(v\otimes v, B)\right]\, dt,   
	\end{equation} where $B=\vv^* \di a$ and $E=\p_t a$ as in Section \ref{sec21}. Accordingly, the ``DtP zone'' is determined by the condition \be \label{e:goodna}I+2B>0\ \mathrm{a.e.}\ \mathrm{in}\ (0,T)\times \Omega,\ee and the DtP map acts as in Theorem \ref{t:smooth}: \be \label{e:dualsolinv2} U^{(H)}(E,B)=(I+2B)^{-1}(\vbs-E).\ee (\dv{Here it is appropriate to use the pair $(E,B)$ instead of the full vector $\dee=(\p_t a, \nabla a, a)$, because the pre-dual functional \eqref{eq:predualn} depends on $\dee$ only through $(E,B)$}). Hence, \begin{multline*}S_H[a; \bar{v}] = \widehat{S}_H \left[ U^{(H)}(E,B,\bar{v}), a \right]=-\cc \\ -\int_0^T \left[(U^{(H)}(E,B,\bar{v})-v_0,E-\vbs)+\frac 1 2\left((U^{(H)}(E,B,\bar{v}))\otimes (U^{(H)}(E,B,\bar{v})), I+2B\right)\right]\, dt\\=\int_0^T \left[(v_0,E)-\frac 1 2 (\vbs,\vbs)+\frac 1 2\left((E-\vbs)\otimes (E-\vbs), (I+2B)^{-1}\right)\right]\, dt. \end{multline*} 
	This is a restriction  of the convex  functional \[\widetilde{S}_H [a]  := \sup_U \widehat{S}_H[v,a]=\int_0^T \left[(v_0,E)-\frac 1 2 (\vbs,\vbs)\right]\, dt-\bigk\] to the ``DtP zone'' \eqref{e:goodna}.
	
	The minimizers of $\widetilde{S}_H$   exist by Theorem \ref{t:ex} (but may lie outside of the ``DtP zone'') and belong to the class \eqref{e:be} (in particular, the corresponding $a=\int_T^t E\in \mathcal H$).
	
	Consequently, 
	\be
	\int_0^T \left(\frac{\delta S_H}{\delta a}\big[a; \bar{v} \big] , \delta a\right)\, dt \\=-\int_0^T \left[(v-v_0,\delta E)+(v\otimes v, \delta B)\right]\, dt
	\ee
	with $v=(I+2B)^{-1}(\vbs-E)$.
	
	For the first stage of the gradient flow scheme, we select an arbitrary first base state $\vbs^1(t,x)$.  For example, we can take $\vbs^1(t,x)=v_0(x)$. We run the first gradient descent in the Hilbert space $\mathcal H$ for the unknown vector function $a^1(t,x,\s)$:
	\begin{multline}
	\int_0^T \left(\p_\s a^1, \delta a\right)\, dt \\=\int_0^T \left[(v^1-v_0,\delta E)+(v^1\otimes v^1, \delta B)\right]\, dt,\quad a^1(T,x,\s) = 0, \quad a^1(t,x,0) = 0, \label{e.56}
	\end{multline}
	where \be v^1(t,x,\s):=(I+2 \vv^* \di a^1(t,x,\s))^{-1}(\vbs^1(t,x)-\p_t a^1(t,x,\s)). \label{e.57}\ee
	Here the variation  $\delta a(t,x)$ should satisfy \[\delta a(T,x) = 0.\]
	If we test \eqref{e.56} with compactly supported $\delta a$ and integrate by parts, we formally get the following nonlinear degenerate parabolic PDE system of $n$ equations: \be \p_\s a^1= -\p_t v^1-\nabla \vv (v^1\otimes v^1), \ee with $v^1$ defined by \eqref{e.57}. Testing now with arbitrary $\delta a$, we infer that the boundary (and initial) conditions are \be v^1(0,x,\s)=v_0(x),\quad a^1(T,x,\s) = 0, \quad a^1(t,x,0) = 0. \label{e.58}
	\ee

	The process is repeated until stage $k_*$ at which we reach the equilibration zone or continued for infinite number of stages. In the first case, the corresponding $$v(t,x):=v^{k_*}(t,x,\tilde{\s}^*_{k_*})$$ would allegedly be a solution to the Nash system \eqref{e:w1}. In the second case, we generate an infinite sequence of base states \[\vbs^k(t,x):=v^{k-1}(t,x,{\s}^*_{k-1}).\] If this sequence has accumulation points in some reasonable topology (say, in the weak $L^2$-topology), those points can be referred to as ``generalized'' solutions to the Nash system.

	\subsection*{Acknowledgement} 
	This work originated from discussions held at Carnegie Mellon University while DV was visiting AA; DV acknowledges the hospitality of CMU. DV's work was supported by CMUC\footnote{https://doi.org/10.54499/UID/00324/2025} under FCT, grants UID/00324/2025 and UID/PRR/00324/2025, and by KAUST Research Funding (KRF), award ORFS-2024-CRG12-6430.3. AA  thanks Marshall Slemrod for suggesting the Nash system for study.


\begin{thebibliography}{10}
	
	\bibitem{ach2}
	A.~Acharya.
	\newblock A dual variational principle for nonlinear dislocation dynamics.
	\newblock {\em Journal of Elasticity}, 154(1):383--395, 2023.
	
	\bibitem{acharyaQAM}
	A.~Acharya.
	\newblock Variational principles for nonlinear PDE systems via duality.
	\newblock {\em Quarterly of Applied Mathematics}, LXXXI(1):127--140, 2023.
	
	\bibitem{acharya2024coupled}
	A.~Acharya.
	\newblock Coupled dislocations and fracture dynamics at finite deformation:
	model derivation, and physical questions.
	\newblock {\em Journal of Materials Science: Materials Theory}, 8(1):6, 2024.
	
	\bibitem{ach3}
	A.~Acharya.
	\newblock A hidden convexity in continuum mechanics, with application to
	classical, continuous-time, rate-(in)dependent plasticity.
	\newblock {\em Mathematics and Mechanics of Solids}, 30(3):701--719, 2025.
	
	\bibitem{acharya2023vector}
	A.~Acharya, I.~Fonseca, L.~Ganedi, and K.~Stinson.
	\newblock Vector field models for nematic disclinations.
	\newblock {\em Journal of Nonlinear Science}, 33(5):85, 2023.
	
	\bibitem{AG25}
	A.~{Acharya} and J.~{Ginster}.
	\newblock {A convex variational principle for the necessary conditions of
		classical optimal control}.
	\newblock {\em arXiv e-prints}, Feb. 2025.
	
	\bibitem{AGS24}
	A.~{Acharya}, J.~{Ginster}, and A.~N. {Sengupta}.
	\newblock {Variational Dual Solutions of Chern-Simons Theory}.
	\newblock {\em arXiv e-prints}, Nov. 2024.
	
	\bibitem{Ach6}
	A.~Acharya and A.~N. Sengupta.
	\newblock Action principles for dissipative, non-holonomic {N}ewtonian
	mechanics.
	\newblock {\em Proc. A.}, 480(2293):Paper No. 20240113, 21, 2024.
	
	\bibitem{AS23}
	A.~Acharya and M.~Slemrod.
	\newblock Existence, uniqueness, and long-time behavior of linearized field
	dislocation dynamics.
	\newblock {\em Quarterly of Applied Mathematics}, LXXXI:247--258, 2023.
	
	\bibitem{ASZ24}
	A.~{Acharya}, B.~{Stroffolini}, and A.~{Zarnescu}.
	\newblock {Variational Dual Solutions for Incompressible Fluids}.
	\newblock {\em arXiv e-prints}, Sept. 2024.
	
	\bibitem{AT11}
	A.~Acharya and L.~Tartar.
	\newblock On an equation from the theory of field dislocation mechanics.
	\newblock {\em Bollettino dell’Unione Matematica Italiana}, 9:409--444, 2011.
	
	\bibitem{Mon_etal_06}
	O.~Alvarez, P.~Hoch, Y.~L. Bouar, and R.~Monneau.
	\newblock Dislocation dynamics: short-time existence and uniqueness of the
	solution.
	\newblock {\em Archive for Rational Mechanics and Analysis}, 181(3):449--504,
	2006.
	
	\bibitem{arora_1}
	A.~Arora and A.~Acharya.
	\newblock Emergent fault friction and supershear in a continuum model of
	geophysical rupture.
	\newblock {\em Journal of the Mechanics and Physics of Solids}, page 105827,
	2024.
	
	\bibitem{Atbu14}
	H.~Attouch, G.~Buttazzo, and G.~Michaille.
	\newblock {\em Variational analysis in {S}obolev and {BV} spaces}, volume~17 of
	{\em MOS-SIAM Series on Optimization}.
	\newblock Society for Industrial and Applied Mathematics (SIAM), Philadelphia,
	PA; Mathematical Optimization Society, Philadelphia, PA, second edition,
	2014.
	\newblock Applications to PDEs and optimization.
	
	\bibitem{CMP18}
	Y.~Brenier.
	\newblock The initial value problem for the {E}uler equations of incompressible
	fluids viewed as a concave maximization problem.
	\newblock {\em Comm. Math. Phys.}, 364(2):579--605, 2018.
	
	\bibitem{Br20}
	Y.~Brenier.
	\newblock Examples of hidden convexity in nonlinear pdes.
	\newblock {\em Preprint}, 2020.
	
	\bibitem{BMO22}
	Y.~Brenier and I.~Moyano.
	\newblock Relaxed solutions for incompressible inviscid flows: a variational
	and gravitational approximation to the initial value problem.
	\newblock {\em Philos. Trans. Roy. Soc. A}, 380(2219):Paper No. 20210078, 12,
	2022.
	
	\bibitem{CLions}
	P.~Cardaliaguet, F.~c. Delarue, J.-M. Lasry, and P.-L. Lions.
	\newblock {\em The master equation and the convergence problem in mean field
		games}, volume 201 of {\em Annals of Mathematics Studies}.
	\newblock Princeton University Press, Princeton, NJ, 2019.
	
	\bibitem{ghoussoub2009}
	N.~Ghoussoub.
	\newblock {\em Self-dual partial differential systems and their variational
		principles}.
	\newblock Springer, 2009.
	
	\bibitem{gimperlein_etal_24}
	H.~Gimperlein, M.~Grinfeld, R.~J. Knops, and M.~Slemrod.
	\newblock The least action admissibility principle.
	\newblock {\em Archive for Rational Mechanics and Analysis}, 249(2):22, 2025.
	
	\bibitem{gimperlein_etal_25}
	H.~Gimperlein, M.~Grinfeld, R.~J. Knops, and M.~Slemrod.
	\newblock On action rate admissibility criteria.
	\newblock {\em arXiv e-prints}, 2025.
	
	\bibitem{Hy21}
	R.~Hynd.
	\newblock The {H}amilton-{J}acobi equation, then and now.
	\newblock {\em Notices Amer. Math. Soc.}, 68(9):1457--1467, 2021.
	
	\bibitem{udk_thesis}
	U.~Kouskiya.
	\newblock {\em Computational Analysis of a duality based approach for solving
		ordinary and partial differential equations}.
	\newblock PhD thesis, Carnegie Mellon University, 2024.
	
	\bibitem{ka1}
	U.~Kouskiya and A.~Acharya.
	\newblock Hidden convexity in the heat, linear transport, and {E}uler's rigid
	body equations: {A} computational approach.
	\newblock {\em Quarterly of Applied Mathematics}, LXXXII:673--703, 2024.
	
	\bibitem{ka2}
	U.~Kouskiya and A.~Acharya.
	\newblock {I}nviscid {B}urgers as a degenerate elliptic problem.
	\newblock {\em Quarterly of Applied Mathematics}, LXXXIII:315--360, 2025.
	
	\bibitem{kpa}
	U.~Kouskiya, R.~L. Pego, and A.~Acharya.
	\newblock Traveling wave profiles for a semi-discrete {B}urgers equation.
	\newblock {\em Physica D}, page 134961, 2025.
	
	\bibitem{mirebeau_stampfli}
	J.-M.~Mirebeau and E.~St\"ampfli.
	\newblock Discretization and convergence of the ballistic Benamou--Brenier formulation of the porous medium and Burgers' equations.
	\newblock {\em ESAIM: Mathematical Modelling and Numerical Analysis}, 60(2):727--759, 2026.
	
	\bibitem{morin2021analysis}
	L.~Morin and A.~Acharya.
	\newblock Analysis of a model of field crack mechanics for brittle materials.
	\newblock {\em Computer Methods in Applied Mechanics and Engineering},
	386:114061, 2021.
	
	\bibitem{Rind25}
	F.~Rindler.
	\newblock Energetic solutions to rate-independent large-strain elasto-plastic
	evolutions driven by discrete dislocation flow.
	\newblock {\em Journal of the European Mathematical Society}, 27(7):2795--2864,
	2025.
	
	\bibitem{sga}
	S.~Singh, J.~Ginster, and A.~Acharya.
	\newblock A hidden convexity of nonlinear elasticity.
	\newblock {\em Journal of Elasticity}, 156(3):975--1014, 2024.
	
	\bibitem{sa}
	N.~Sukumar and A.~Acharya.
	\newblock Variational formulation based on duality to solve partial
	differential equations: Use of {B}-splines and machine learning approximants.
	\newblock {\em Computer Methods in Applied Mechanics and Engineering},
	441:117909, 2025.
	
	\bibitem{V22}
	D.~Vorotnikov.
	\newblock Partial differential equations with quadratic nonlinearities viewed
	as matrix-valued optimal ballistic transport problems.
	\newblock {\em Arch. Ration. Mech. Anal.}, 243(3):1653--1698, 2022.
	
	\bibitem{V25}
	D.~{Vorotnikov}.
	\newblock {Dafermos' principle and Brenier's duality scheme for defocusing
		dispersive equations}.
	\newblock {\em arXiv e-prints}, Jan. 2025.
	
	\bibitem{V26b}
	D.~Vorotnikov.
	\newblock A unified duality framework for barotropic, quantum and Korteweg fluids.
	\newblock {\em arXiv e-prints}, arXiv:2602.18917, 2026.
	
	\bibitem{zhang2017non}
	C.~Zhang, X.~Zhang, A.~Acharya, D.~Golovaty, and N.~Walkington.
	\newblock A non-traditional view on the modeling of nematic disclination
	dynamics.
	\newblock {\em Quarterly of Applied Mathematics}, 75(2):309--357, 2017.
	
	\bibitem{zhang2015single}
	X.~Zhang, A.~Acharya, N.~J. Walkington, and J.~Bielak.
	\newblock A single theory for some quasi-static, supersonic, atomic, and
	tectonic scale applications of dislocations.
	\newblock {\em Journal of the Mechanics and Physics of Solids}, 84:145--195,
	2015.
	
\end{thebibliography}
\end{document}